\def\H{{\mathbb H}}
\def\C{{\mathbb C}}
\def\Z{{\mathbb Z}}
\def\Q{{\mathbb Q}}
\def\N#1{{\cal N}\!=\!#1}
\def\qed{\hfill\framebox[2.5mm][t1]{\phantom{x}}}
\title{Mathieu Moonshine and the Geometry of K3 Surfaces}
\author{Thomas Creutzig\thanks{Department of Mathematical and Statistical Sciences, University of Alberta,
Edmonton, Alberta  T6G 2G1, Canada. 
email: creutzig@ualberta.ca}\ \,
and\ \, Gerald H\"ohn\thanks{Kansas State University, 138 Cardwell Hall, Manhattan, KS 66506-2602, USA. 
email: \hbox{gerald@math.ksu.edu}}} 
\date{}
\begin{document}

\bibliographystyle{amsalpha}

\theoremstyle{plain}
\newtheorem*{introthm}{Theorem}
\newtheorem{thm}{Theorem}[section]
\newtheorem{prop}[thm]{Proposition}
\newtheorem{lem}[thm]{Lemma}
\newtheorem{cor}[thm]{Corollary}
\newtheorem{conj}[thm]{Conjecture}

\theoremstyle{definition}
\newtheorem{defi}[thm]{Definition}
\newtheorem{rem}[thm]{Remark}

\newcommand {\tr}{\text{tr}}

\renewcommand{\baselinestretch}{1.2}

\maketitle

\begin{abstract}
We compare the moonshine observation of Eguchi, Ooguri and Tachikawa 
relating the Mathieu group $M_{24}$ and the complex elliptic genus of a K3 surface
with the symmetries of geometric structures on K3 surfaces. 

Two main results are that the complex elliptic genus of a K3 surface is a virtual module for the Mathieu
group $M_{24}$ and also for a certain vertex operator superalgebra $V^G$ where $G$ is the holonomy group. 
\end{abstract}

\section{Introduction}

%Because our moonshine observation involves only rational functions 5
%we will call it {\it rational Mathieu moonshine.\/}

Moonshine refers to an unexpected relation between finite simple groups, automorphic forms and physics, 
with the most prominent example of monstrous moonshine~\cite{CN,B}. 
A few years ago, a similar observation was made concerning the Mathieu group $M_{24}$
and the complex elliptic genus of a K3 surface. 

\subsection{Mathieu Moonshine}

Eguchi, Ooguri and Tachikawa \cite{EOT} observed that 
the sum decomposition of the Jacobi form $2\,\phi_{0,1}(z;\tau)$
of weight~$0$ and index~$1$ into the (expected) characters
of the $\N4$ super Virasoro algebra at central charge $c=6$ has multiplicities which are 
simple sums of dimensions of irreducible representations of the largest Mathieu group $M_{24}$.
The Jacobi form $2\,\phi_{0,1}(z;\tau)$ has several interpretations, it is the complex elliptic genus $\chi_{-y}(q,{\cal L}X)$
of a K3 surface $X$ and in string theory it has the meaning of the partition function of physical states in the Ramond-sector 
of a  sigma model on $X$. 

The observation of Eguchi, Ooguri and Tachikawa suggests the existence of a graded
$M_{24}$-module $K=\bigoplus_{n=0}^{\infty} K_n\, q^{n-1/8}$ whose graded character is given by the sum decomposition of 
the elliptic genus into characters of the $\N4$ super Virasoro algebra.   
Subsequently the analogues to McKay-Thompson series in monstrous moonshine where proposed in
several works~\cite{C, CD, EH, GHV, GHV2}. 
The McKay-Thompson series for $g$ in $M_{24}$ are of the form
\begin{equation}\label{thompsonseries}
\Sigma_g(q)\ =\ q^{-1/8}\,\sum_{n=0}^{\infty} {\rm Tr}(g|K_n)\, q^{n} \ = \
\frac{e(g)}{24}\, \Sigma(q) - \frac{f_g(q)}{\eta(q)^3}
\end{equation}
where  $\Sigma=\Sigma_e$ is the graded dimension of $K$ (an explicit mock modular form), $e(g)$ is the character of
the $24$-dimensional permutation representation of $M_{24}$, the series $f_g$ is a certain explicit modular form of weight~$2$ for some
subgroup $\Gamma_0(N_g)$ of ${\rm SL}(2,\Z)$ and $\eta$ is the Dedekind eta function.
Terry Gannon~\cite{Gannon} has proven that these McKay-Thompson series 
indeed determine a $M_{24}$-module:
\begin{thm}\label{thm:gannon}
The McKay-Thompson series as in~\cite{GHV,EH} determine a virtual graded
$M_{24}$-module $K=\bigoplus_{n=0}^{\infty} K_n\, q^{n-1/8}$. For 
$n\geq 1$ the $K_n$ are honest (and not only virtual) $M_{24}$-representations.  
\end{thm}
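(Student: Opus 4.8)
The plan is to build $K$ degree by degree from its graded character and then check the two stated properties. For $n\geq 0$ let $\chi_{K_n}$ be the class function on $M_{24}$ defined by $\chi_{K_n}(g)=[q^{\,n}]\bigl(q^{1/8}\Sigma_g(q)\bigr)$. A class function $\psi$ is the character of a virtual (respectively honest) representation precisely when $\langle\psi,\chi_i\rangle\in\Z$ (respectively $\in\Z_{\geq 0}$) for each of the $26$ irreducible characters $\chi_i$ of $M_{24}$, so the theorem is equivalent to the assertion that the $26$ multiplicity generating functions
\begin{equation*}
m_i(\tau)\ =\ \sum_{n\geq 0}\langle\chi_{K_n},\chi_i\rangle\,q^{\,n-1/8}\ =\ \frac{1}{|M_{24}|}\sum_{g}\overline{\chi_i(g)}\,\Sigma_g(q)
\end{equation*}
have integer Fourier coefficients that are moreover non-negative in every degree $n\geq 1$. (In degree $0$ one finds $K_0=-2\cdot\mathbf{1}$, which is genuinely virtual, and this is exactly why honesty is asserted only for $n\geq 1$.) Each $m_i$ is a finite $\C$-linear combination of the $\Sigma_g$ and hence a mock modular form of weight $\tfrac12$ on a congruence subgroup of $\mathrm{SL}(2,\Z)$ with a fixed multiplier system; by \eqref{thompsonseries} the mock part of each $\Sigma_g$ is $\tfrac{e(g)}{24}\Sigma$, so the shadow of $m_i$ is $\langle\chi_i,e\rangle$ times a fixed nonzero multiple of $\eta(q)^3$. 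Since the permutation character decomposes as $e=\mathbf{1}\oplus\mathbf{23}$, this shadow vanishes unless $i\in\{\mathbf{1},\mathbf{23}\}$: for the other $24$ indices $m_i$ is an honest weight-$\tfrac12$ modular form, and only two of the $m_i$ are genuinely mock.

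First I would pin down the coefficient field. Galois-conjugate conjugacy classes of $M_{24}$ --- the pairs carrying the irrationalities $\sqrt{-7}$, $\sqrt{-15}$ and $\sqrt{-23}$ --- carry Galois-conjugate McKay--Thompson series, as one reads off from the closed forms \eqref{thompsonseries}, so each $m_i$ is fixed by $\mathrm{Gal}(\overline{\Q}/\Q)$ and has rational coefficients. A priori those coefficients lie in $\tfrac{1}{|M_{24}|\,D}\Z$, where $D$ absorbs the denominators coming from $\eta(q)^{-3}$ and from the levels $N_g$; only the primes $2,3,5,7,11,23$ can therefore occur in a denominator, and the task is to rule them out.

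Integrality I would establish one prime at a time by a Sturm-type finiteness argument. For the $24$ indices with vanishing shadow, $m_i$ is already an honest modular form of known half-integral weight and level, so its $p$-integrality is detected by an effectively bounded set of Fourier coefficients, which one then verifies directly from \eqref{thompsonseries}. For $i\in\{\mathbf{1},\mathbf{23}\}$ write $m_i=\tfrac{1}{24}\langle\chi_i,e\rangle\,\Sigma+m_i'$ with $m_i'=-\tfrac{1}{|M_{24}|}\sum_g\overline{\chi_i(g)}\,f_g(q)/\eta(q)^3$ an honest weight-$\tfrac12$ modular form; since the integrality of $\Sigma$ itself is known (its coefficients are those of the K3 elliptic genus $2\,\phi_{0,1}$ minus the massless $\N4$ contributions, all integers), one is reduced to a finite computation --- a Sturm bound for the honest form $m_i'$ at each prime above, together with a direct check, using the known coefficients of $\Sigma$, that $\tfrac{1}{24}\Sigma+m_i'$ is integral up to that bound. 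The step I expect to be the main obstacle is keeping all of these bounds genuinely effective and small: the auxiliary honest forms live at level $\mathrm{lcm}_g(N_g)$ with a half-integral weight and a non-trivial multiplier, so clearing $\eta^{-3}$ cleanly, handling the prime $2$ where the multiplier is most delicate, and controlling the resulting weight and level tightly enough that the Sturm bounds stay within computational reach are where the genuine work lies.

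It then remains to prove that the degree-$n$ coefficient of each $m_i$ is non-negative for $n\geq 1$, and this I would deduce from the Rademacher realization of the $\Sigma_g$. The space of mock modular forms of the relevant weight, level and shadow is finite-dimensional, and the prescribed principal parts at the cusps single out a unique element, namely the associated Rademacher sum. Averaging these sums over $M_{24}$ with the weights $\overline{\chi_i(g)}/|M_{24}|$ expresses $\langle\chi_{K_n},\chi_i\rangle$ as a convergent series over moduli of Kloosterman-type sums against $I$-Bessel functions; the contribution of the smallest admissible modulus, which occurs only for $g=e$, is a strictly positive main term that grows exponentially in $\sqrt{n}$, while the Weil bound for the remaining Kloosterman sums together with the decay of the Bessel factors shows that this term dominates once $n$ exceeds an explicit bound. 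The finitely many remaining small values of $n$ are checked by hand against the tabulated $q$-expansions of the $\Sigma_g$. This completes the verification that the $K_n$ assemble into a virtual graded $M_{24}$-module that is honest in every degree $n\geq 1$.
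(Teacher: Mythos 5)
The paper does not prove Theorem~\ref{thm:gannon} at all: it is quoted as Gannon's result and attributed to~\cite{Gannon}, with only the remark that ``the proof uses techniques of modular forms and group theory.'' So there is no in-paper argument to compare yours against; the relevant benchmark is Gannon's own proof, and your outline is in fact a faithful reconstruction of its architecture. The reduction to the $26$ multiplicity series $m_i$, the observation that the shadow is governed by $e=\mathbf{1}\oplus\mathbf{23}$ so that $24$ of the $m_i$ are honest weight-$\tfrac12$ forms, the rationality via Galois-stability of the pairs $7AB$, $14AB$, $15AB$, $21AB$, $23AB$, the prime-by-prime integrality via Sturm-type bounds, and the positivity via the exponential dominance of the $g=e$ contribution (the other $\Sigma_g$ grow only like $e^{\pi\sqrt{2n}/c_{\min}}$ with $c_{\min}\geq 2$) followed by a finite check --- all of this is how the theorem is actually proved.

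That said, as a self-contained proof your text is a program rather than an argument: essentially all of the content of Gannon's paper lives in the steps you explicitly defer. Concretely, (i) the identification of the $\Sigma_g$ with Rademacher sums, which your positivity step leans on, is itself a theorem (Cheng--Duncan~\cite{CD}, and the uniqueness claim ``principal parts determine the form'' needs the Serre--Stark description of weight-$\tfrac12$ forms to rule out unary theta corrections at the relevant levels and multipliers); (ii) the Sturm-type integrality argument for half-integral weight with the $\eta^3$ multiplier, especially at $p=2$, and the explicit Weil-bound estimates with their attendant finite verifications, are exactly the ``genuine work'' you name but do not carry out. So the proposal is correct in strategy and correctly locates the difficulties, but it does not discharge them; within the context of this paper that is acceptable, since the paper itself discharges nothing here and simply cites~\cite{Gannon}.
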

The proof uses techniques of modular forms and group theory. It does not explain any connection to geometry or physics. 
Since the complex elliptic genus is both an invariant of K3 surfaces and a string theory object, 
it is believed that there
is a geometric and/or physics explanation of the Mathieu moonshine observation. 
In this work, we investigate implications of the geometry to the moonshine.  

\subsection{The complex elliptic genus}

The complex elliptic genus appears originally in the physical literature. % in~\cite{...}.
A rigorous mathematical definition and a first investigation of its basic topological properties, like rigidity
have been first given by Krichever~\cite{Krichever} and the second author~\cite{Ho-Diplom}
following similar work by Ochanine, Witten, Hirzebruch and Bott/Taubes for other elliptic
genera.
The complex elliptic genus equals the graded dimension of the cohomology 
of the chiral de Rham complex of a complex manifold~\cite{MSV,BoLi}.

We recall that a complex genus in the sense of Hirzebruch~\cite{Hirzebruch-Habil} is a graded ring homomorphism
from the complex bordism ring into some other graded ring $R$. 
The complex elliptic genus can be formally defined as the $S^1$-equivariant $\chi_y$-genus 
of the loop space of a manifold
$$\chi_y(q,{\cal L}X):= (-y)^{-d/2}\chi_y\bigl(X,\bigotimes_{n=1}^{\infty}\Lambda_{yq^n}T^*
\otimes \bigotimes_{n=1}^{\infty}\Lambda_{y^{-1}q^n}T
\otimes \bigotimes_{n=1}^{\infty}S_{q^n}(T^*\oplus T)\bigr) $$
having values in $\Q[y^{1/2},y^{-1/2}][[q]]$.
For manifolds $X$ with vanishing first Chern class it is the Fourier expansion of a 
Jacobi form of weight~$0$ and index half the complex dimension of $X$ (see~\cite{Ho-Diplom}). 
For automorphisms $g$ of $X$ one has the corresponding equivariant elliptic genus $\chi_y(g;q,{\cal L}X)$.

%We note that the $\chi_y$-genus determines the complex elliptic genus of an ${\rm SU}$-manifold
%up to complex dimension~$11$.

The Jacobi form $2\,\phi_{0,1}(z;\tau)$ equals the complex elliptic genus of K3 surfaces 
(cf.\ for example~\cite{Ho-Diplom}, p.~62).

%%%%%%%%%%%%

\subsection{Results and Organization}

By combining Mukai's~\cite{Mukai} classification of groups acting symplectically on K3 surfaces 
and information from rational character tables we prove in Section~\ref{Mukaigroups} 
(see Corollary~\ref{cor:virtualM24ellipticgenus}):
\begin{introthm}
The complex elliptic genus of a K3 surface can be given the structure of a
virtual $M_{24}$-module which is compatible with the $H$-module structure
for all possible groups $H$ of symplectic automorphisms of K3 surfaces under restriction.
\end{introthm}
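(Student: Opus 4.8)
The plan is not to construct an $M_{24}$-module from scratch but to take the canonical one supplied by Theorem~\ref{thm:gannon}: the virtual graded $M_{24}$-module $K=\bigoplus_{n\ge0}K_n\,q^{n-1/8}$, whose pieces $K_n$ for $n\ge1$ are honest representations and hence are \emph{uniquely} determined by their characters (the Fourier coefficients of the $\Sigma_g$). Through the $\N4$-decomposition of $2\,\phi_{0,1}(z;\tau)$ at $c=6$ — the massless part, carrying a small canonical virtual $M_{24}$-module, together with the massive part built from $K$ — this equips the elliptic genus with an $M_{24}$-module structure. The asserted compatibility is then \emph{equivalent} to an elementwise statement: for every $g$ that occurs as a symplectic automorphism of a K3 surface, the equivariant elliptic genus $\chi_y(g;q,{\cal L}X)$ coincides with the Mathieu-moonshine twined Jacobi form $\phi_g$ (the form whose $\N4$-decomposition has massive multiplicities ${\rm Tr}(g\mid K_n)$ and the prescribed massless contribution). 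Indeed, restriction to a subgroup $H\subseteq M_{24}$ and the $\N4$-decomposition of a twined genus are both determined by values on conjugacy classes, so once this elementwise identity holds, ${\rm Res}^{M_{24}}_{H}K$ together with the restricted massless module reproduces the geometric $H$-module for \emph{every} $H$ at once; the a~priori delicate points — outer automorphisms relating different embeddings $H\hookrightarrow M_{24}$, or consistency between two overlapping maximal Mukai groups — never have to be met head-on. Conversely, geometry by itself cannot pin down the $M_{24}$-module, since only a handful of conjugacy classes are realized by symplectic automorphisms; Theorem~\ref{thm:gannon} is what supplies the action of the remaining classes.

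To make the two sides of the elementwise identity explicit I would argue as follows. If $H$ acts symplectically on a K3 surface $X$, then for $g\in H$ every Fourier coefficient of $\chi_y(g;q,{\cal L}X)$ is an equivariant holomorphic Lefschetz number, hence a virtual character of $H$; expanding in the massless and massive $\N4$ characters therefore gives each multiplicity space a virtual $H$-module structure, and restriction to a subgroup is just restriction of these characters, so it suffices to treat the eleven \emph{maximal} groups in Mukai's classification. Each of these is a subgroup of $M_{24}$, and the key input is Nikulin's description of symplectic automorphisms: a symplectic $g$ of order $N$ has isolated fixed points whose number and whose local normal-bundle weights depend only on $N$, so the induced isometry of $H^{*}(X;\Z)$ has a Frame shape depending only on $N$, equal to the cycle shape of the image of $g$ in the $24$-point permutation representation of $M_{24}$. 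Two things follow: $\chi_y(g;q,{\cal L}X)$ depends only on the $M_{24}$-class of $g$; and, via the holomorphic fixed-point formula, it is a weak Jacobi form of weight~$0$ and index~$1$ on $\Gamma_0(N)$ with a multiplier fixed by that Frame shape. Comparing it with $\phi_g$, which is a weak Jacobi form of the same type with the same leading behaviour, therefore reduces to matching finitely many Fourier coefficients. Running this comparison class by class — using the fixed-point formulae on one side and the rational character tables of $M_{24}$ and of the eleven maximal groups on the other — establishes the elementwise identity for each of the (finitely many) classes of order $1,\dots,8$ with the cycle shapes dictated by Nikulin's formulae.

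By the reduction of the first paragraph this is enough, yielding the theorem and Corollary~\ref{cor:virtualM24ellipticgenus}: the single $M_{24}$-module structure carried by $2\,\phi_{0,1}$ restricts compatibly to the geometric $H$-module for every group $H$ of symplectic automorphisms of K3 surfaces. The step I expect to be the real work is the elementwise identification of $\chi_y(g;q,{\cal L}X)$ with $\phi_g$ — not only for generators of cyclic subgroups but for every element of the non-cyclic maximal Mukai groups — together with the bookkeeping that guarantees the Frame shape of $g$ on $H^{*}(X;\Z)$, which governs the fixed-point contributions and hence $\chi_y(g;q,{\cal L}X)$, is precisely the cycle shape of the $M_{24}$-class into which the Mukai embedding sends $g$. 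Once that dictionary is in place, what remains is a finite, if laborious, check against character tables.
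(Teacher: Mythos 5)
Your proposal is correct in outline, but it is a genuinely different proof from the one the paper gives for this statement. The paper proves Corollary~\ref{cor:virtualM24ellipticgenus} purely by finite group theory, with no input from Mathieu moonshine: it axiomatizes the geometric data as a ``virtual Mukai representation'' --- a family of generalized rational characters of the eleven maximal symplectic groups whose values depend only on the order of the element, which is exactly what the holomorphic Lefschetz formula and Table~\ref{finitesymp} supply --- and shows in Theorem~\ref{virtualM24}, by an explicit lattice computation in Magma, that the $\Z$-module of all such families coincides with the module of restrictions of generalized rational $M_{24}$-characters to the eight relevant conjugacy classes; vanishing Schur indices then turn generalized rational characters into virtual rational modules. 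Your route instead imports the specific module $K$ of Theorem~\ref{thm:gannon} and reduces compatibility to the elementwise identity between the equivariant elliptic genus and the twined Jacobi form of Mathieu moonshine --- which is precisely the paper's Theorem~\ref{moonshinegeometric}, proved there via Lemma~\ref{lem:jacobi} together with the Aoki--Ibukiyama description of weak Jacobi forms of weight $0$ and index $1$ for $\Gamma_0(N)$; note that your step ``reduces to matching finitely many Fourier coefficients'' silently uses this finite-dimensionality result, and your whole construction rests on Gannon's theorem, whose proof is far from elementary. The paper itself acknowledges your route as viable (see the remark in Section~\ref{comparing} citing Theorems~\ref{moonshinegeometric} and~\ref{thm:gannon}). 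What each approach buys: yours singles out the canonical Mathieu-moonshine module, which is the more meaningful choice; the paper's is self-contained and strictly more flexible, since it extends \emph{any} virtual Mukai representation --- for instance the symmetric-power series $\chi(X,S_tT)$, which the paper later shows is \emph{not} compatible with the Mathieu-moonshine prescription, so in that setting the lattice argument cannot be replaced by yours.
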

We remark that all such groups $H$ of symplectic automorphisms of K3 surfaces are subgroups
of $M_{24}$.

\smallskip

We then explain in Section~\ref{Holonomy} how to construct bundles over a K3 surface that at each fiber have
the structure of a (possibly twisted) module $M$ for a vertex operator superalgebra $V$ and that the corresponding
twisted Todd genus is a character of a virtual module for a vertex operator subalgebra $V^G$ 
where $G$ is the structure group of the bundle (see Theorem~\ref{character}).
For the case of the complex elliptic genus of a K3 surface we have (Corollary~\ref{cor:voa}):
\begin{introthm}
The complex elliptic genus of a K3 surface is the graded character of a natural
virtual vertex operator algebra module for $V^G$ and its $\N4$ super Virasoro vertex subalgebra.
\end{introthm}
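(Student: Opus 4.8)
The plan is to deduce Corollary~\ref{cor:voa} from Theorem~\ref{character} by feeding in the right geometric data. Let $X$ be a K3 surface; by Yau's theorem it carries a Ricci-flat K\"ahler metric whose restricted holonomy group is $G=Sp(1)=SU(2)$, and the holomorphic frame bundle (equivalently, the bundle of formal coordinate systems used to define the chiral de Rham complex) reduces to a principal $G$-bundle. Take $V$ to be the free-field vertex operator superalgebra of rank $2$ attached to $\C^2$ --- the $bc$--$\beta\gamma$ system out of which the chiral de Rham complex is locally built --- with $G=SU(2)$ acting through its linear action on $\C^2$, and let $M$ be the $V$-module whose associated bundle over the $G$-reduced frame bundle of $X$ is the sheaf of vertex algebras underlying the (Ramond-twisted) chiral de Rham complex of $X$; the Ramond twist is the spectral flow producing the $q^{n-1/8}$ normalization. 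The crucial structural input is the classical fact that for this $V$ and $G$ the invariant subalgebra $V^{SU(2)}$ contains the small $\N4$ super Virasoro vertex algebra at central charge $c=6$.

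With this data, Theorem~\ref{character} asserts that the twisted Todd genus of the associated bundle is the graded character of a virtual $V^G$-module, the grading being by the $L_0$-eigenvalue on $M$ and refined by the weight under the Cartan $U(1)$ of the $\N4$ $SU(2)$ R-symmetry inside $V^G$, which furnishes the second variable $y$. It then remains to identify this twisted Todd genus with the complex elliptic genus of $X$. This is the comparison already recalled in the introduction, due to Malikov--Schechtman--Vaintrob and Borisov--Libgober~\cite{MSV,BoLi}: the two-variable graded holomorphic Euler characteristic of the cohomology of the chiral de Rham complex agrees term by term with the loop-space index
\[
(-y)^{-1}\,\chi_y\!\Bigl(X,\ \bigotimes_{n\ge 1}\Lambda_{yq^n}T^*\otimes\bigotimes_{n\ge 1}\Lambda_{y^{-1}q^n}T\otimes\bigotimes_{n\ge 1}S_{q^n}(T^*\oplus T)\Bigr),
\]
which for a K3 surface equals $2\,\phi_{0,1}(z;\tau)$. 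Finally, since $V^G$ contains the $\N4$ vertex algebra, restricting the virtual $V^G$-module to this subalgebra gives the asserted virtual $\N4$ super Virasoro module, with the same graded character; naturality holds because the holonomy reduction, hence the isomorphism class of the resulting $V^G$-module, is independent of the choices (Ricci-flat metric, complex structure in the hyperk\"ahler family) up to conjugacy.

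The main obstacle is bookkeeping rather than a new idea: one has to pin down the precise twisted module $M$ so that the $L_0$-grading reproduces the Ramond-sector normalization and the leading term, verify that passing to $SU(2)$-invariants leaves the full $\N4$ algebra intact and does not cut $V^G$ down further, and match the two-variable gradings on the two sides of the identification, tracking the $(-y)^{-d/2}$ prefactor and the cohomological-degree signs that are exactly what make the module virtual rather than honest. A secondary point is to confirm that it is the Calabi--Yau condition (triviality of the canonical bundle of K3) which permits $V^G$ to be taken with $G$ the holonomy group $SU(2)$, rather than the larger structure group of a general complex surface, and which makes the $\N4$ structure global.
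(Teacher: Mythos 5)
Your argument is essentially the paper's proof of Corollary~\ref{cor:voa}: reduce the structure group to ${\rm SL}(2,\C)$ (equivalently ${\rm SU}(2)$) using triviality of the canonical bundle, apply Theorem~\ref{character} to the explicit free-field $V$ and its Ramond-sector module $M$, and restrict along the inclusion of the $\N4$ super Virasoro algebra into $V^G$ (Lemma~\ref{lem:algebras}, part~3). The only inessential detour is the appeal to Malikov--Schechtman--Vaintrob and Borisov--Libgober: since the complex elliptic genus is here defined directly as the twisted Todd genus $(-y)^{-d/2}\chi(X,\underline{M})$, no comparison with the cohomology of the chiral de Rham complex is required, and the paper deliberately avoids relying on that sheaf because the $\N4$ structure on it is metric-dependent and not known to extend to all of $V^G$.
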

We also obtain explicit formulas for the decomposition of $M$ into characters of the $\N4$ super Virasoro algebra.

\smallskip

The equivariant complex elliptic genus of a K3 surface can be computed by using the holomorphic 
Lefschetz fixed-point formula. On the other hand, the known Mathieu moonshine  uses the McKay-Thompson
series, i.e.\ equivariant elliptic genera in the sense of string theory and super conformal field theory 
on K3 surfaces. 
A priori these two could be different, but fortunately we find (Theorem~\ref{moonshinegeometric}):
\begin{introthm}
For a finite symplectic automorphism $g$ acting on a K3 surface $X$
the equivariant elliptic genus and the $\N4$ character determined by the McKay-Thompson series of Mathieu moonshine
agree.
\end{introthm}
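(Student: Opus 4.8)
The strategy is to identify both sides as weak Jacobi forms of weight~$0$ and index~$1$ on a congruence subgroup $\Gamma_0(N)$, where $N$ is the order of $g$, and then to pin them down inside the resulting finite-dimensional space. By the classification of symplectic automorphisms of K3 surfaces (Nikulin; Mukai~\cite{Mukai}) the only orders that occur are $N\in\{1,2,3,4,5,6,7,8\}$, so these ambient spaces are small and agreement of a few Fourier coefficients forces equality.

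For the left-hand side I would evaluate $\chi_y(g;q,{\cal L}X)$ by the holomorphic Lefschetz fixed-point formula applied to the virtual bundle defining the complex elliptic genus. For $N\geq 2$ a symplectic automorphism has isolated fixed points, and by Nikulin's analysis both the number $|X^g|$ and the multiset of tangent rotation numbers $\{(\xi_p,-\xi_p)\}_{p\in X^g}$ depend only on $N$; in particular $\chi(X^g)=|X^g|=e(g)$ for the conjugacy class of $M_{24}$ with the matching cycle shape (equivalently, the same value $e(g)=|X^g|$), which in our range is determined by $N$ and $|X^g|$ up to the algebraically conjugate pair $7A$, $7B$. The fixed-point formula then expresses $\chi_y(g;q,{\cal L}X)$ as an explicit finite sum of ratios of Jacobi theta functions that is independent of the chosen pair $(X,g)$, and, exactly as for the non-equivariant genus (\cite{Krichever,Ho-Diplom}), one checks that this sum is a weak Jacobi form of weight~$0$ and index~$1$ on $\Gamma_0(N)$ with $z=0$ specialization equal to $\chi(X^g)=e(g)$. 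In particular it may be computed once from any convenient model, e.g.\ a Kummer surface for $N=2$.

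For the right-hand side, reassembling the short and long $\N4$ characters with the coefficients of the McKay-Thompson series $\Sigma_g$ of~\eqref{thompsonseries} as multiplicities produces a weak Jacobi form $\phi_g^{\rm moon}$ of weight~$0$ and index~$1$ on $\Gamma_0(N_g)$, with $N_g=N$ in the geometric cases; explicit formulas for it are recorded in~\cite{EH,GHV,GHV2}. Since every weak Jacobi form of weight~$0$ and index~$1$ on $\Gamma_0(N)$ equals $\alpha(\tau)\,\phi_{0,1}(z;\tau)+\beta(\tau)\,\phi_{-2,1}(z;\tau)$ with $\alpha\in M_0(\Gamma_0(N))$ and $\beta\in M_2(\Gamma_0(N))$ --- a space of dimension at most four in our range --- the two forms coincide as soon as their $z=0$ values agree (which they do, both being $e(g)=\chi(X^g)$) and finitely many further coefficients of the theta decomposition match; these last I would extract from the Lefschetz formula on one side and from the known data for $\Sigma_g$ on the other and check.

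The main difficulty is bookkeeping rather than conceptual. One must align the geometric normalization of the equivariant elliptic genus (tangent rotation numbers, signs, the $(-y)^{-d/2}$ prefactor) with the physics normalization of the twining genus so that ``same level and multiplier'' and ``same finitely many coefficients'' hold on the nose; one must use, for the composite orders $N=4,6,8$, the correct multiplicities of the several tangent-space types fixed by Nikulin's classification; and one must note that for $N=7$ the classes $7A$ and $7B$ share a cycle shape and hence a twining genus, so no ambiguity arises. Once these points are settled, the finite-dimensionality of the space of Jacobi forms turns the statement into an explicit finite verification.
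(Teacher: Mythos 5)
Your proposal is correct and follows essentially the same route as the paper: the paper's Lemma~4.2 establishes via the holomorphic Lefschetz fixed-point formula and the Nikulin fixed-point data that $\chi_{-y}(g;q,{\cal L}X)$ is a weak Jacobi form of weight~$0$ and index~$1$ for $\Gamma_0({\rm ord}(g))$, and the theorem is then proved exactly as you outline, by invoking the Aoki--Ibukiyama structure result to write any such form as $\alpha\,\phi_{0,1}+h\,\phi_{-2,1}$ with $h\in M_2(\Gamma_0(N))$, fixing $\alpha=e(g)/12$ via the $y=-1$ specialization to the equivariant Euler characteristic, and matching the first three coefficients of $h$ against $f_g$ since $\dim M_2(\Gamma_0(N))\le 3$ for $N\le 8$.
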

We also compare in this final section the geometric structures for K3 surfaces which we have considered 
with the proposed Mathieu moonshine. %Section~\ref{Mathieumoonshine}.

\bigskip

\noindent {\bf Acknowledgements} We thank the referees for the very useful comments. 

%
%%%%%%%%%%%%%%%%%%%%%%%%%%%%%%%%%%%%%%%%%%%%%%%%%%%%%%%%%%%%%%%%%%%%%%%%%%%%%%%%%%%%%%%%%%%%%%%%%%%%%%%%%%5

\section{Symplectic group actions on K3 surfaces and the Mathieu group $M_{24}$}\label{Mukaigroups}

A K3 surface is a simply connected complex surface with trivial canonical line bundle. 
An action of a group $H$  on $X$ by complex automorphisms is called symplectic
if the induced action on the space $H^0(X,\Omega^2)$ of holomorphic $2$-forms is trivial. 
For a symplectic action 
one can find a hyperk\"ahler metric $g$ on $X$ for which $H$ acts by isometries and which
is a K\"ahler metric for the given complex structure. Recall that a metric for a four dimensional
manifold is called hyperk\"ahler if the holonomy group is contained in 
${\rm Sp}(1)\cong {\rm SU}(2)$. Conversely, given a hyperk\"ahler metric $g$ on $X$ there is a family
of complex structures on the underlying differentiable manifold parameterized by a $2$-sphere and
the action of a group $H$ by isometries is symplectic for all of them.

Let $g$ be a non-trivial symplectic automorphism of finite order acting on a K3 surface $X$. 
By~\cite{Nikulin} such automorphisms have only isolated fixed points. The holomorphic Lefschetz
fixed point formula~\cite{AtiyahBott} shows that only seven cases are possible~\cite{Nikulin} 
(see also~\cite{Mukai}). We list them together with the eigenvalues of $g$ acting on the
tangent spaces in Table~\ref{finitesymp}. Here, $\zeta_n$ stands for a primitive $n$-th
root of unity.

\begin{table}\caption{Fixed point sets of non-trivial symplectic automorphisms $g$}\label{finitesymp}
\centerline{\begin{tabular}{cccc}
order of $g$  & \# of fixed points & eigenvalues of $g$ at the fixed points \\ \hline
$2$   & 8 & $8 \times (-1,-1)$\\
$3$     & 6 & $6 \times (\zeta_3,\zeta_3^{-1})$\\
$4$     & 4 & $4 \times (i,-i)$ \\
$5$     & 4 & $2 \times (\zeta_5,\zeta_5^{-1})$ and $ 2 \times (\zeta_5^2,\zeta_5^{-2})$\\
$6$     & 2 & $2 \times (\zeta_6,\zeta_6^{-1})$\\
$7$    & 3 & $(\zeta_7,\zeta_7^{-1})$, $(\zeta_7^2,\zeta_7^{-2})$ and  $(\zeta_7^3,\zeta_7^{-3})$\\
$8$      & 2 & $(\zeta_8,\zeta_8^{-1})$ and $(\zeta_8^3,\zeta_8^{-3})$\\
\end{tabular}}
\end{table}

\medskip

The finite symplectic automorphism groups of K3 surfaces have been classified by Mukai.
They are all isomorphic to subgroups of the Mathieu group $M_{23}$ of a particular type.
We recall that $M_{23}$ is isomorphic to a one-point stabilizer for the permutation action 
of $M_{24}$  on $24$ elements.

\begin{thm}[Mukai\ \cite{Mukai}]
A finite group $H$ acting symplectically on a K3 surface is isomorphic to
a subgroup of $M_{23}$ with at least five orbits on the regular
permutation representation of the Mathieu group $M_{24}\supset M_{23}$ on $24$ elements.
\end{thm}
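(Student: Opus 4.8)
The plan is to deduce Mukai's theorem from the structure theory of K3 lattices and the Nikulin--Mukai analysis of symplectic actions, rather than reprove it from scratch. First I would recall that for a finite group $H$ acting symplectically on a K3 surface $X$, the induced action on $H^2(X,\Z)$, which is the even unimodular lattice $\Lambda$ of signature $(3,19)$, fixes the transcendental part and the holomorphic $2$-form, so $H$ acts trivially on a positive-definite $3$-space; consequently the invariant sublattice $\Lambda^H$ contains a positive-definite rank-$3$ piece and the coinvariant lattice $\Lambda_H := (\Lambda^H)^\perp$ is negative-definite. By Nikulin's results, $\Lambda_H$ is a root-free (no $(-2)$-vectors) negative-definite even lattice of rank $\le 19$ on which $H$ acts without nonzero fixed vectors, and the isomorphism type of the pair $(H,\Lambda_H\otimes\Q)$ is a discrete invariant constrained by the holomorphic Lefschetz fixed-point formula --- this is exactly the input recorded in Table~\ref{finitesymp}, which pins down, for each element $g$ of prime-power order, the number of fixed points and hence the trace of $g$ on $H^*(X)$, equivalently the character of the $24$-dimensional representation $\Lambda_H\otimes\Q \oplus (\text{trivial of the right rank})$.

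The second step is to package these traces into a single $24$-dimensional rational representation of $H$. The point is that the Lefschetz data says: for every $g\neq e$ in $H$, the character value of $H$ acting on $H^{\rm even}(X,\Q)\ominus(\text{3-dim trivial piece from }H^{2,0}\oplus H^{0,2}\oplus\langle\omega\bar\omega\rangle)$-type combination equals the number of fixed points of $g$, which by the table is a nonnegative integer $\le$ a bound depending only on $|g|$. One then checks, using the Burnside/orthogonality relations and the fact that $\chi(1)=24$, $\chi(g)=\#\mathrm{Fix}(g)$, that $\chi$ is precisely the permutation character of a faithful action of $H$ on a $24$-element set, and moreover that this action has at least $5$ orbits (because the number of orbits is $\frac1{|H|}\sum_g\chi(g)$, and the trivial summand of dimension $\ge 4+\mathrm{rk}(\Lambda^H)-19 \ge 5$ forces $\langle\chi,1\rangle\ge 5$). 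Equivalently: the anti-invariant lattice $\Lambda_H$ embeds into the Leech-type root-free lattice, and one invokes Mukai's embedding $H\hookrightarrow \mathrm{Co}_0$ (or directly into $M_{24}$ via the Niemeier lattice $N(A_1^{24})$); the ``at least five orbits'' condition is then the lattice-theoretic statement $\mathrm{rk}\,\Lambda^H_{\rm mod\ leech}\ge\dots$, i.e.\ the fixed space of $H$ on the $24$-dimensional permutation module has dimension $\ge 5$.

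The third step is the converse-free direction we actually need: every $H$ satisfying the above numerical/lattice constraints is realized. For this I would use surjectivity of the period map and the Torelli theorem for K3 surfaces: given an abstract action of a candidate $H$ on $\Lambda$ with negative-definite root-free coinvariant lattice and a $3$-dimensional trivial positive part, one produces a marked K3 surface whose period point lies in the $H$-invariant positive-definite $2$-plane and which avoids all $(-2)$-classes orthogonal to it, so $H$ acts by symplectic automorphisms; the root-freeness of $\Lambda_H$ is exactly what guarantees no unwanted $(-2)$-curves obstruct the action. Enumerating the finitely many such $H$ against the subgroup structure of $M_{24}$ gives Mukai's list.

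The main obstacle, and the place where Mukai's original argument does real work, is the passage from ``$H$ acts on a root-free negative-definite lattice of rank $\le 19$ without fixed vectors, with prime-power elements constrained as in Table~\ref{finitesymp}'' to ``$H$ is one of the eleven maximal groups on Mukai's list, each sitting inside $M_{24}$ with $\ge 5$ orbits.'' This is essentially a finite but intricate classification: one must control all possible $H$-lattices, rule out candidates using the combination of the Lefschetz table (which kills, e.g., elements of order $\ge 9$ and restricts cycle types) and the Niemeier-lattice embedding, and then match the survivors with subgroups of $M_{24}$. I would not grind through this enumeration here; instead I would cite Mukai's analysis together with the later lattice-theoretic streamlinings (Kond\=o, Xiao) and restrict the presentation to verifying that the resulting groups do all embed in $M_{23}\subset M_{24}$ with the stated orbit count, which is the only feature used in the sequel.
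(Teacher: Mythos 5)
This is a statement the paper itself does not prove: it is quoted as Mukai's theorem with a citation to \cite{Mukai}, so there is no internal argument to compare against. Your outline is a reasonable roadmap of the standard strategy --- Nikulin's analysis of the coinvariant lattice $\Lambda_H=(\Lambda^H)^\perp$ (negative definite, root-free, rank $\le 19$), the identification of the $24$-dimensional $H$-module $H^*(X,\Q)$ whose character values are the fixed-point counts of Table~\ref{finitesymp}, and the bound on the number of trivial summands --- but as written it is not a proof, and you say so yourself: the decisive step, namely that a finite group whose character satisfies these numerical constraints must embed in $M_{23}$ with at least five orbits, is exactly the content of Mukai's classification, which you defer to the literature. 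So your proposal ends up on the same footing as the paper's bare citation, just with more context attached.

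A few points in that context need correction. First, the dimension count is garbled: the fixed subspace of $H$ on the $24$-dimensional module has dimension $2+\mathrm{rk}\,\Lambda^H=24-\mathrm{rk}\,\Lambda_H\ge 24-19=5$; your expression ``$4+\mathrm{rk}(\Lambda^H)-19$'' evaluates to a negative number. Second, the claim that Burnside/orthogonality relations let one ``check that $\chi$ is precisely the permutation character of a faithful action of $H$ on a $24$-element set'' is overstated: a nonnegative integer-valued virtual character of degree $24$ containing the trivial character need not be a permutation character, and establishing that it is (and that the resulting action factors through $M_{23}$, i.e.\ has a fixed point) is where all the work lies. Third, the route through the Niemeier lattice $N(A_1^{24})$ and $\mathrm{Co}_0$ is Kond\=o's later streamlining rather than Mukai's original argument, which proceeded by a group-theoretic case analysis; citing it is fine, but it should not be attributed to \cite{Mukai}. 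None of this damages the paper, since the theorem is used here only as a black box, but as a standalone proof attempt the proposal has an acknowledged, essential gap.
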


\begin{rem}\label{mukaigroups}
\begin{enumerate}%[noitemsep,nolistsep]
\item Every such group $H$ is a subgroup of $11$ maximal groups explicitly given in\ \cite{Mukai}.
They are of the following type (cf.~\cite{Mukai} for details):
{\setlength{\arraycolsep}{4pt} 
$$\hspace{-1cm}\begin{array}{c|cccccccccccc}
\hbox{no.}       & 1 & 2 & 3 & 4 & 5 & 6 & 7 & 8 & 9 & 10 & 11 \\ \hline
\hbox{structure} & L_2(7) & A_6 & S_5 & 2^4A_5 & 2^4S_4 & A_{4,4} & T_{192} & 2^4D_{12} & 3^2D_8 & 3^2Q_8 & SL_2(3)\\
\hbox{order} & 168 & 360 & 120 & 960 & 384 & 288 & 192 & 192 & 72  & 72 & 48
\end{array} $$
}

\item The non-trivial minimal subgroups correspond to the seven possibilities of
finite symplectic groups actions as listed in Table~\ref{finitesymp}. Below, 
we list the ATLAS names~\cite{atlas} of the corresponding generators (as well as the trivial
element) as elements in the two
Mathieu groups $M_{23}$, $M_{24}$ and the Conway group ${\rm Co}_0$ which contains
$M_{24}$ as a subgroup. 
$$\begin{array}{c|cccccccc}
\hbox{order} &  1 & 2 & 3 & 4 & 5 & 6 & 7 & 8 \\ \hline
M_{23} & 1A & 2A & 3A & 4A & 5A & 6A & 7A,\, 7B & 8A \\
M_{24} &  1A & 2A & 3A  & 4B & 5A & 6A & 7A,\, 7B & 8A  \\
{\rm Co}_0 & 1A+ & 2A+ & 3B+ & 4C+ & 5B+ & 6E+ & 7B+ & 8E+  \\
\end{array}$$

\item 
There are in total $80$ abstract isomorphism types of symplectic automorphism groups $H$ 
(including the trivial one) acting on K3 surfaces.
Considered as a subgroup of $O(L)$, where
$L=H^2(X,\Z)$ is the intersection lattice of $X$, the group is unique up to conjugacy
in $85$ of those cases.
For five cases, there exist two possibilities, each; see~\cite{Hashimoto}.
\end{enumerate}
\end{rem}

Given a holomorphic vector bundle $E$ on a K3 surface $X$ associated to the tangent bundle,
and a finite automorphism group $H$ of $X$, the induced action of $H$ on the 
cohomology groups $H^q(X,E)$ makes the virtual vector space $\chi(X, E)=\bigoplus_{q=0}^2(-1)^q H^q(X,E)$
into a virtual $H$-module. For $H$ symplectic, we know from the holomorphic
Lefschetz formula and Table~\ref{finitesymp} that the value of 
$$\chi(g;X,E)=\tr (g|H^0(X,E)) - \tr (g|H^1(X,E)) + \tr (g|H^2(X,E))$$
depends only on the order of $g\in H$ and is rational. In particular, we see that the structure of the virtual
$H$-module $\chi(X,E)$ for all eleven maximal Mukai groups $H$ is completely determined
by the value of $\chi(g;X,E)$ for elements $g$ of order $1$, $2$, \ldots, $8$.
We axiomatize this situation in the following definition.

\begin{defi}
A {\it Mukai\/} (resp.\ {\it virtual Mukai\/}) {\it representation\/} $\rho$ is a family 
$(\rho_i)_{i=1,\ldots,11}$ of rational characters (resp.\ generalized rational characters)
of the eleven maximal symplectic groups $H_i$, $i=1$, $\ldots$, $11$, such that
$\rho_i(g)$ for $g\in H_i$ depends only on the order of $g$ and this value
is independent of $i$.
\end{defi}

\begin{thm}\label{virtualM24}
Let $\rho=(\rho_i)_{i=1,\ldots,11}$ be a virtual Mukai representation. 
Then there exists a virtual rational representation $\mu$ of $M_{24}$ 
such that $\mu|_{H_i}=\rho_i$ for each of the eleven Mukai
groups $H_i$.
\end{thm}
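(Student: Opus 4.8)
The plan is to realize the desired virtual $M_{24}$-representation $\mu$ by solving a finite linear-algebra problem over $\mathbb{Q}$. Since $M_{24}$ has $26$ conjugacy classes, a virtual rational character of $M_{24}$ is the same data as a $\mathbb{Z}$-linear combination of the irreducible rational characters of $M_{24}$; equivalently, by rationality, it is a class function on $M_{24}$ that is constant on each \emph{rational} conjugacy class (a union of algebraically conjugate classes) and satisfies the integrality conditions dual to the rational character lattice. First I would recall that the eleven maximal Mukai groups $H_i$ are subgroups of $M_{23}\subset M_{24}$, and that every element of each $H_i$ has order in $\{1,2,3,4,5,6,7,8\}$ and is conjugate \emph{inside $M_{24}$} to one of the specific classes listed in Remark~\ref{mukaigroups}(2), namely $1A,2A,3A,4B,5A,6A,7A,7B,8A$ (with $7A$ and $7B$ fused by an outer automorphism so that a rational character cannot distinguish them). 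The hypothesis that $\rho_i(g)$ depends only on the order of $g$ then means: there is a single tuple of rational numbers $(a_1,a_2,a_3,a_4,a_5,a_6,a_7,a_8)$, indexed by element order, with $\rho_i(g)=a_{\mathrm{ord}(g)}$ for every $g\in H_i$ and every $i$.

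Next I would define a class function $\tilde\mu$ on all of $M_{24}$ by prescribing its values on the $26$ conjugacy classes. On the nine classes $1A,2A,3A,4B,5A,6A,7A,7B,8A$ I set $\tilde\mu$ equal to the corresponding $a_k$ (this is consistent because these are exactly the classes hit by the Mukai groups and the value depends only on the order); on the remaining $17$ conjugacy classes of $M_{24}$ I am free to choose any rational values, subject to $\tilde\mu$ being constant on rational classes. The key point to verify is then that some such extension $\tilde\mu$ actually lies in the lattice spanned over $\mathbb{Z}$ by the irreducible rational characters of $M_{24}$ — only then is it a genuine virtual rational character $\mu$, and by construction $\mu|_{H_i}=\rho_i$ for all $i$.

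For the integrality step I would argue as follows. The span over $\mathbb{Q}$ of the irreducible rational characters is the full space of rational-valued class functions (those constant on rational classes), so one can certainly find a rational extension; the issue is clearing denominators while respecting the $\mathbb{Z}$-lattice. Here I would use the permutation character of $M_{24}$ on $24$ points, call it $\pi$, together with the trivial and other readily available rational characters (e.g.\ the $23$-dimensional irreducible, and characters obtained from $\pi\otimes\pi$, symmetric and exterior squares, induced characters from $M_{23}$, etc.) to exhibit, by an explicit but routine computation with the $M_{24}$ character table from~\cite{atlas}, a $\mathbb{Z}$-basis — or at least a spanning set with controlled index — of the rational character lattice; then solving the resulting linear system over $\mathbb{Z}$ produces $\mu$. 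I expect the main obstacle to be precisely this integrality bookkeeping: a priori the $a_k$ coming from a virtual Mukai representation are rational, not integral, and one must check that the constraints imposed by the Mukai groups are compatible with the $M_{24}$ rational character lattice rather than only with the ambient $\mathbb{Q}$-vector space. I would handle this by first proving it for a convenient set of ``generating'' virtual Mukai representations — for instance the ones arising geometrically from bundles $\chi(X,E)$ as discussed just before the Definition, whose $M_{24}$-lifts can be read off from known decompositions — and then using linearity, since virtual Mukai representations form a lattice and the assignment $\rho\mapsto\mu$ we are constructing is additive. If the generating virtual Mukai representations span the Mukai lattice over $\mathbb{Z}$ (which a rank count and the explicit fixed-point data in Table~\ref{finitesymp} should confirm), the theorem follows; the surjectivity of restriction $\bigoplus_i \mathrm{Res}_{H_i}$ onto the Mukai lattice is the one genuinely nontrivial input, and it is verified by the finite computation with rational character tables alluded to in the paper's introduction.
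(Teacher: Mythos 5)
Your proposal follows essentially the same route as the paper: translate everything into a finite integral linear-algebra problem on the nine $M_{24}$-classes $1A,2A,3A,4B,5A,6A,7A/7B,8A$, and verify by an explicit character-table computation that the lattice of admissible tuples $(a_1,\ldots,a_8)$ coming from virtual Mukai representations is contained in the lattice of restrictions of virtual rational $M_{24}$-characters --- the paper does exactly this with a single Magma calculation ($K=N$ in its notation) plus Benard's theorem that all Schur indices of $M_{24}$ are $1$, and you correctly isolate this surjectivity of restriction as the one nontrivial input. The only piece I would discard is the suggested fallback via ``geometric generators'' $\chi(X,E)$: their $M_{24}$-lifts are precisely what the theorem is meant to produce (so that route is circular), and there is no reason the geometrically arising virtual Mukai representations span the full Mukai lattice over $\Z$ rather than merely over $\Q$, so the direct lattice computation you describe is the argument to keep.
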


\begin{rem}
\begin{enumerate}%[noitemsep,nolistsep]
\item The embedding of $H_i$ into $M_{24}$ may not be unique 
but the value of the characters is, i.e.\ $\mu|_{H_i}$ is always
well-defined.
\item The representation $\mu$ of $M_{24}$ is not unique. In fact,
it can be changed by any virtual rational representation of $M_{24}$ with the 
property that the character for the eight conjugacy classes of elements belonging
to symplectic automorphism is zero.
\end{enumerate}
\end{rem}

\noindent{\bf Proof:} 
Let $L_i$ be the $\Z$-module generated by the rational characters of the group $H_i$. We consider
the submodule $L_i^0$ of $L_i$ defined by the property that the values for those conjugacy classes of $H_i$
for which the elements have the same order are equal. 
Let $M$ be the $\Z$-module of $\Z$-valued functions on the set $\{1,\,2,\,\ldots,\, 8\}$.
We let $N_i$ be the submodule of $M$ consisting
of those functions on $\{1,\,2,\,\ldots,\, 8\}$ for which the restriction to the subset which occur as
orders of elements of $H_i$ belongs to $L_i^0$. Set $N=\bigcap_{i=1}^{11} N_i$.

It follows from the definition that for a Mukai representation $\rho=(\rho_i)$ the individual
characters $\rho_i$ belong to $L_i^0$. Since the value of the character is independent of $i$ they define
together an element of $N$. 

\smallskip 

Let $K$ be the submodule of $M$ generated by the restriction of the generalized rational characters
of $M_{24}$ to those conjugacy classes of order $1$, $\ldots$, $8$ which are listed
in the table under Remark~\ref{mukaigroups}.2.\ above.
A computer calculation with Magma~\cite{Magma} shows $K=N$. 
Since the Schur indices of the $M_{24}$-representations are all $1$~\cite{Schur}, 
a generalized rational character for $M_{24}$
actually defines a virtual rational $M_{24}$-module. This proves the theorem.
\qed

\begin{rem}
1. \  One  has  that $M/N$ is a finite abelian group abstractly isomorphic to $2 \times 4 \times 24 \times40320$, where we use the short-hand notation $n$ for $\Z/n\Z$.
We also list for each $i$ the possible orders of elements of $H_i$ and the structure of the
quotient modules $N_i/N$ in the following table. 
$$\begin{array}{c|cccccc}
\hbox{no.}       & 1 & 2 & 3 & 4 & 5 & 6\\ \hline
\hbox{orders}    & 1-4,7 & 1-5  & 1-6 & 1-5 & 1-4,8   &1-4,6 \\
 N_i/N           &  4\times 12\times 480  &  4\times 4\times 672   &  2\times 4\times 672   & 12\times 168     & 3\times 420 & 2\times 280 
\end{array}
$$

$$\begin{array}{c|ccccc}
\hbox{no.}       & 7 & 8 & 9 & 10 & 11 \\ \hline
\hbox{orders}    & 1-4,6  & 1-4,6   & 1-4,6   & 1-4  & 1-4,6,8   \\
  N_i/N          &  2\times 840  &  2\times 840   &    2\times 4\times 1120 & 4\times 4\times 3360  & 2\times 1680 
\end{array}  
$$

\noindent 2. \  In addition, the computer calculation shows that in order to obtain the virtual $M_{24}$-representation $\mu$ it 
is enough to only consider the virtual representations $\rho_i$ of the groups $H_1$, $H_5$, $H_6$ and 
in addition one of the three groups $H_2$, $H_3$ or $H_4$.
It is not enough to use only three of the groups $H_i$.
\end{rem}

\smallskip

Let $K'$ and $K''$  be the submodules of $M$ generated by the restriction of the rational characters
of $M_{23}$ respectively $Co_0$ to those conjugacy classes of order $1$,~$\ldots$,~$8$ which are listed
in the table in Remark~\ref{mukaigroups}.2. 
Since $K\subset K'$ and  $K'\subset N$ one has $K'=N$. 
A calculation shows that the lattice $K''$ for the Conway group $Co_0$ has index~$2$ in $K=K'=N$.

\medskip

\begin{cor}\label{cor:virtualM24ellipticgenus}
The complex elliptic genus of a K3 surface can be given the structure of a
virtual rational $M_{24}$-module which is compatible with the $H_i$-module structure
for each of the eleven Mukai groups $H_i$ under restriction.
\end{cor}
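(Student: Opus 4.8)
The plan is to deduce Corollary~\ref{cor:virtualM24ellipticgenus} from Theorem~\ref{virtualM24} by verifying that the collection of equivariant complex elliptic genera $\chi_y(g;q,{\cal L}X)$, taken over the eleven maximal Mukai groups $H_i$, assembles into a \emph{virtual Mukai representation} in the sense of the definition above, coefficient by coefficient in $q$ and $y$. First I would recall from Section~\ref{Mukaigroups} that for a symplectic automorphism $g$ of finite order on a K3 surface, the fixed-point set is a finite set of points whose number and tangent-space eigenvalues are completely determined by the order of $g$ (Table~\ref{finitesymp}). Since the complex elliptic genus is built functorially out of the tangent bundle $T$ and its dual via the operations $\Lambda_t$ and $S_t$, the equivariant genus $\chi_y(g;q,{\cal L}X)$ is computed by the holomorphic Lefschetz fixed-point formula as a sum of local contributions, one at each fixed point, and each such contribution is an explicit rational function of $y^{1/2}$ and $q$ depending only on the eigenvalue pair $(\zeta,\zeta^{-1})$ of $g$ at that point. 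Hence $\chi_y(g;q,{\cal L}X)$ depends only on the order of $g$, and — since summing over all conjugates of $g$ inside any $H_i$ gives the same answer — the value is literally the same for the corresponding conjugacy class in any of the eleven groups.

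Next I would extract coefficients. Writing $\chi_y(g;q,{\cal L}X)=\sum_{m,\ell} c_{m,\ell}(g)\,q^m y^{\ell}$, each $g\mapsto c_{m,\ell}(g)$ is, for fixed $m,\ell$, a class function on $H_i$; because $H^q(X,E)$ carries an honest $H_i$-action for the relevant tangent-associated bundles $E$, the alternating sum $\chi(X,E)$ is a virtual $H_i$-module and $c_{m,\ell}$ is a generalized character of $H_i$. It takes rational values — indeed values independent of $i$ and depending only on the order of $g$ — by the fixed-point computation just described. Therefore, for each pair $(m,\ell)$, the family $\rho^{(m,\ell)}=(\rho^{(m,\ell)}_i)_{i=1,\dots,11}$ with $\rho^{(m,\ell)}_i=c_{m,\ell}|_{H_i}$ is a virtual Mukai representation. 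Applying Theorem~\ref{virtualM24} to each $(m,\ell)$ produces a virtual rational $M_{24}$-representation $\mu^{(m,\ell)}$ with $\mu^{(m,\ell)}|_{H_i}=\rho^{(m,\ell)}_i$; reassembling, $\bigoplus_{m,\ell}\mu^{(m,\ell)}\,q^m y^{\ell}$ is a virtual rational $M_{24}$-module whose graded character is the complex elliptic genus of a K3 surface and whose restriction to each $H_i$ is the corresponding equivariant elliptic genus, which is exactly the $H_i$-module structure on $\chi_y(q,{\cal L}X)$ coming from the geometry.

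The main obstacle — and really the only substantive point — is confirming the two rationality/uniformity properties that feed into the definition of a virtual Mukai representation: that each coefficient $c_{m,\ell}(g)$ is rational and that it depends only on $\mathrm{ord}(g)$, uniformly across the eleven groups. Both follow from Table~\ref{finitesymp} together with the Lefschetz formula, but one must be a little careful that the local contribution at a fixed point is manifestly an element of $\Q[y^{1/2},y^{-1/2}][[q]]$ (clearing denominators coming from the eigenvalue factors $1-\zeta$, one checks the poles cancel in the sum over fixed points, as is standard for these genera — see~\cite{Ho-Diplom}) and that it is invariant under $\zeta\mapsto\bar\zeta$, which pairs up the contributions listed in Table~\ref{finitesymp}. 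A secondary, purely bookkeeping concern is that the embeddings $H_i\hookrightarrow M_{24}$ are not unique; but as noted in the remark following Theorem~\ref{virtualM24}, the restriction $\mu|_{H_i}$ is nonetheless well-defined on the relevant conjugacy classes, so the compatibility statement is unambiguous. Granting these points, the corollary is immediate.

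\begin{proof}
For a finite symplectic automorphism $g$ of a K3 surface $X$ and a holomorphic vector bundle $E$ associated to the tangent bundle, the holomorphic Lefschetz fixed-point formula~\cite{AtiyahBott} expresses $\chi(g;X,E)$ as a sum of local contributions over the isolated fixed points of $g$. By Table~\ref{finitesymp} the number of fixed points and the eigenvalue pairs $(\zeta,\zeta^{-1})$ of $g$ on the tangent spaces depend only on the order of $g$; hence the same is true of $\chi(g;X,E)$. Applying this to the tangent-associated bundles occurring in
$$\chi_y(q,{\cal L}X)= (-y)^{-1}\chi_y\bigl(X,\bigotimes_{n=1}^{\infty}\Lambda_{yq^n}T^*
\otimes \bigotimes_{n=1}^{\infty}\Lambda_{y^{-1}q^n}T
\otimes \bigotimes_{n=1}^{\infty}S_{q^n}(T^*\oplus T)\bigr),$$
and collecting powers of $q$ and $y$, we obtain for each coefficient a generalized character $c_{m,\ell}$ of the relevant symplectic group whose value on $g$ is rational and depends only on $\mathrm{ord}(g)$; moreover this value is the same for the corresponding class in any of the eleven maximal Mukai groups $H_i$, since all such classes have a common cycle structure in $M_{24}$ as listed in Remark~\ref{mukaigroups}.2.

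Thus for every $(m,\ell)$ the family $\rho^{(m,\ell)}=(c_{m,\ell}|_{H_i})_{i=1,\dots,11}$ is a virtual Mukai representation. By Theorem~\ref{virtualM24} there is a virtual rational $M_{24}$-representation $\mu^{(m,\ell)}$ with $\mu^{(m,\ell)}|_{H_i}=\rho^{(m,\ell)}_i$ for all $i$. Setting $\mu=\bigoplus_{m,\ell}\mu^{(m,\ell)}\,q^m y^{\ell}$ yields a virtual rational $M_{24}$-module whose graded character is $\chi_y(q,{\cal L}X)=2\,\phi_{0,1}(z;\tau)$, the complex elliptic genus of a K3 surface, and whose restriction to each $H_i$ agrees with the $H_i$-module structure on $\chi(X,\cdot)$ coming from the geometric action. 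This proves the corollary.
\end{proof}
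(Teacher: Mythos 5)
Your proposal is correct and follows essentially the same route as the paper: the coefficients of the elliptic genus are twisted Euler characteristics $\chi(X,E)$ of tangent-associated bundles, the holomorphic Lefschetz formula together with Table~\ref{finitesymp} shows the equivariant values are rational and depend only on $\mathrm{ord}(g)$, so they form a virtual Mukai representation to which Theorem~\ref{virtualM24} applies. The paper states this in three sentences; your version merely spells out the coefficient-by-coefficient extraction (and the Galois-stability of the fixed-point data underlying rationality) in more detail.
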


\noindent{\bf Proof:} The coefficients of the elliptic genus of a 
K3 surface $X$ are of the form $\chi(X,E)$ where $E$ is a vector bundle 
associated to the tangent bundle of $X$. The $\chi(g;X,E)$ define a
generalized rational $H_i$-character for each of the $H_i$
(see equation~\eqref{eq:fixed} in the last section). 
The result follows now from Theorem~\ref{virtualM24}. 
 \qed

\smallskip

It was shown by Gannon~\cite{Gannon}, that the $M_{24}$-modules of
Mathieu moonshine can be extended to virtual $Co_0$-modules. Thus in these cases
it follows from Theorem~\ref{moonshinegeometric} that
the above mentioned $\Z_2$-ob\-structions in $N/K''$ vanish.

%%%%%%%%%%%%%%%%%%%%%%%%%%%%%%%%%%%%%%%%%%%%%%%%%%%%%%%%%%%%%%%%%%%%%%%%%%%%%%%%%%%%%%%%%%%%%%%%%%%%%%%%%%5

\section{The elliptic genus  of a K3 surface}\label{Holonomy}

We discuss properties of the elliptic genus related to
the hyperk\"ahler geometry of K3 surfaces using vertex algebras.

%%%%%%%%%%%%%%%%%%%%%%%%%%%%%%%%%%%%%%%%%%%

\subsection{The complex elliptic genus and vertex algebras}\label{ellgenus}

Let $V$ be a vertex operator superalgebra, $M=\bigoplus M_n$ be a (twisted) $V$-module
and $G$ be a subgroup of the automorphism group of $M$ such that
\begin{equation}\label{decomposition}
M=\bigoplus_{\lambda \in {\rm Irr}(G)}\lambda\otimes M_{\lambda}
\end{equation}
as $G \times V^G$-module (i.e.~the $M_{\lambda}$ are trivial $G$-modules) with finite dimensional $\lambda$.
%
% cf.~Quantum Galois 
%theory of Miamoto, Dong and Mason for precise sufficient conditions).

For a $G$-principal bundle $P$ over a closed manifold $X$ we have the associated
bundle $\underline{M}=P\times_G M$ over $X$ which at every fiber has the structure of a 
vertex operator superalgebra module isomorphic to $M$.
The grading of $M$ provides the decomposition $\underline{M}=\bigoplus_{n} \underline{M_n}\,q^n$ into finite dimensional
vector bundles over $X$, where we introduced the formal variable $q$.
Let $\underline{W_{\lambda}}$ be the complex vector bundle over $X$ associated to a finite dimensional representation $\lambda$ of $G$.
Then the decomposition~(\ref{decomposition}) of $M$ leads to a decomposition
$$ \underline{M}=\bigoplus_{\lambda \in {\rm Irr}(G)}\underline{W_\lambda}\otimes \underline{M_{\lambda}}$$
where we consider  $\underline{M_{\lambda}}$ as the trivial bundle $X \times  M_{\lambda}$.

On a $d$-dimensional complex manifold $X$, one can consider the Dolbeault operator 
twisted by a holomorphic vector bundle $E$.
%(If $X$ is a spin manifold one can consider the Dirac operator instead). 
The holomorphic Euler characteristic 
$\chi(X,E)=\sum_{q=0}^d(-1)^q \dim H^q(X,E)$ for the sheaf of holomorphic sections in~$E$
equals the index of the Dolbeault operator and can be computed by the twisted
Todd genus
$$\chi(X,E)=  {\rm Td}(X){\rm ch}(E)[X]$$
where ${\rm Td}(X)$ is the total Todd class of $X$ and ${\rm ch}(E)$ is the Chern character of 
$E$; cf.~\cite{Hirzebruch-Habil}. Note that we use the same symbol $\chi(X, E)$ for the virtual vector space and its graded dimension. 
In the following we assume that the bundles $\underline{W_\lambda}$ and hence $\underline{M}$ are holomorphic bundles.
%This expression in characteristic classes is defined for almost complex manifolds $X$ and complex
%vector bundles $E$ and the coefficients of the resulting polynomial in $y$ are still integers in this case. 

We are interested in the twisted Todd genus 
$$\chi(X,\underline{M})=\sum_n \chi(X,\underline{M_n})\,q^n\in {\Z}[[q]].$$
\begin{thm}\label{character}
The Todd genus $\chi(X,\underline{M})$ is the graded character of a natural virtual vertex operator superalgebra module for $V^G$.
\end{thm}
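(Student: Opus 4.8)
The plan is to decompose the bundle $\underline{M}$ according to the $G \times V^G$-structure of $M$ and then apply additivity and multiplicativity of the twisted Todd genus. First I would use the decomposition $\underline{M} = \bigoplus_{\lambda \in {\rm Irr}(G)} \underline{W_\lambda} \otimes \underline{M_\lambda}$ established above, where each $\underline{M_\lambda}$ is the trivial bundle $X \times M_\lambda$. Since the Euler characteristic is additive on short exact sequences and hence on direct sums of holomorphic bundles, and since $\chi(X, \underline{W_\lambda} \otimes (X \times M_\lambda)) = \chi(X, \underline{W_\lambda}) \cdot M_\lambda$ (the Chern character of a tensor with a trivial bundle just multiplies by its rank, compatibly with the grading of $M_\lambda$), I would obtain
$$\chi(X,\underline{M}) \ = \ \bigoplus_{\lambda \in {\rm Irr}(G)} \chi(X,\underline{W_\lambda}) \otimes M_\lambda$$
as a graded virtual vector space, where $\chi(X,\underline{W_\lambda}) = \bigoplus_n \chi(X, \underline{M_n}\text{-part of }W_\lambda)$ is a virtual integer (in fact the coefficients are finite since each $\underline{M_n}$ is finite dimensional).

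Next I would equip the right-hand side with its $V^G$-module structure. Each $M_\lambda$ is by construction a module for $V^G$ (it is the multiplicity space in the $G \times V^G$-decomposition of $M$), and the grading of $M$ restricts to a grading on $M_\lambda$ compatible with the $V^G$-action. The integers $\chi(X,\underline{W_\lambda})$ are constants in the sense that they carry no further structure, so the formal combination $\bigoplus_\lambda \chi(X,\underline{W_\lambda}) \otimes M_\lambda$ is a $\Z$-linear combination of honest $V^G$-modules, hence a virtual $V^G$-module; its graded dimension is precisely $\sum_n \chi(X,\underline{M_n})\, q^n = \chi(X,\underline{M})$. One should also observe that this virtual module is $\Z$-graded (rather than merely $\frac12\Z$-graded in the superalgebra sense), which matches the fact that $\chi(X,\underline{M}) \in \Z[[q]]$ as stated.

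The one point requiring care — and the main obstacle — is the word \emph{natural}: one wants the virtual $V^G$-module to be defined canonically, independent of auxiliary choices, and one wants to know that the decomposition $M = \bigoplus_\lambda \lambda \otimes M_\lambda$ does not introduce ambiguity. Here I would note that, given the $G$-action on $M$, the isotypic decomposition is canonical and each multiplicity space $M_\lambda = {\rm Hom}_G(\lambda, M)$ is a well-defined $V^G$-module, so the construction $M \mapsto \bigoplus_\lambda \chi(X,\underline{W_\lambda})\otimes M_\lambda$ is functorial in $M$ and compatible with the principal bundle data. The only genuinely nontrivial input is the interchange of the infinite direct sum over $n$ (the $q$-grading) with the finite isotypic sum over $\lambda$, which is legitimate because in each fixed degree $n$ only finitely many $\lambda$ occur, $\underline{M_n}$ being a finite-dimensional bundle; the twisted Todd genus is then applied degreewise. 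With this, the theorem follows.
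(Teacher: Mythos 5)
Your proposal is correct and starts from the same decomposition $\underline{M}=\bigoplus_{\lambda}\underline{W_\lambda}\otimes\underline{M_\lambda}$ as the paper, but you justify the key step by a different mechanism. The paper does not transport the $V^G$-structure formally onto the multiplicity spaces; instead it equips the twisted Dolbeault complexes themselves with a $V^G$-action (constant sections of the bundle $\underline{V^G}$ acting fiberwise on the last tensor factor) and exhibits a commutative diagram showing that the isomorphism $\Gamma(X,A^{q}\otimes\underline{W_\lambda})\otimes M_{\lambda}\to\Gamma(X,A^{q}\otimes\underline{W_\lambda}\otimes\underline{M_\lambda})$ induced by constant sections of the trivial bundle $\underline{M_\lambda}$ intertwines the twisted Dolbeault operators as a map of $V^G$-modules; the $V^G$-structure then descends to cohomology, so the virtual space $\bigoplus_q(-1)^qH^q(X,\underline{M_n})$ is natively a $V^G$-module. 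Your route --- computing $\chi(X,\underline{W_\lambda}\otimes\underline{M_\lambda})=\chi(X,\underline{W_\lambda})\cdot\dim M_\lambda$ degreewise by Riemann--Roch and then declaring $\bigoplus_\lambda\chi(X,\underline{W_\lambda})\otimes M_\lambda$ to be the virtual module --- is more elementary and does establish the literal statement, since each $M_\lambda={\rm Hom}_G(\lambda,M)$ is canonically a $V^G$-module and the interchange of the finite isotypic sum with the $q$-grading is unproblematic, as you note. What the paper's diagram buys in addition is that the cohomology groups carry a geometrically defined (fiberwise) $V^G$-action, which is the intended force of the word \emph{natural}. One small correction: $\chi(X,\underline{W_\lambda})$ is a single integer, not a $q$-series; the entire $q$-grading of the summand $\chi(X,\underline{W_\lambda})\otimes M_\lambda$ comes from the grading of $M_\lambda$, so your parenthetical rewriting of $\chi(X,\underline{W_\lambda})$ as a sum over $n$ should be dropped.
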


\noindent{\bf Proof:} One has the following identity between virtual graded vector spaces:
$$\chi(X,\underline{M})
%\ =\  \chi(X, \bigoplus_{\lambda \in {\rm Irr}(G)}\underline{W_\lambda}\otimes \underline{M_{\lambda}})
\ =\ \bigoplus_{\lambda \in {\rm Irr}(G)} \chi(X,\underline{W_\lambda}\otimes \underline{M_{\lambda}})
\ =\ \bigoplus_{\lambda \in {\rm Irr}(G)} \chi(X, \underline{W_\lambda})\otimes M_{\lambda}.$$
%where $\chi(X,E)$ is here identified with the virtual complex vector space of dimension  $\chi(X,E)$.
The first equality is trivial.
The last equality needs some justification. 
%On the level of virtual vector spaces this is clear since 
%the bundle $\underline{V_{\lambda}}=X\times V_{\lambda} $ is trivial. 
%To see that this holds on the level of  $V^G$-modules,
%we have to show is that the following diagram is commutative on the level of $V^G$-modules
We have the diagram 
$$\begin{array}{ccc}
 \Gamma(X,A^{q}\otimes\underline{W_\lambda})\otimes M_{\lambda} & \longrightarrow &  \Gamma(X,A^{q}\otimes \underline{W_\lambda}\otimes \underline{M_\lambda})\\
\\
 \downarrow { \scriptsize \partial^{q}_{\underline{W_\lambda} }\otimes {\rm id }}  &   &  \downarrow { \scriptsize \partial^{q}_{\underline{W_\lambda}\otimes \underline{M_\lambda} }} \\
 \\
 \Gamma(X,A^{q+1}\otimes \underline{W_\lambda})\otimes M_{\lambda}  & \longrightarrow  & \Gamma(X,A^{q+1}\otimes \underline{W_\lambda}\otimes \underline{M_\lambda})
\end{array}$$
where $A^{q}$ is the exterior bundle $\Lambda^{q}T_\C^* X$,
$\partial^{q}_E$ denotes the twisted Dolbeault operator with coefficients in a holomorphic bundle $E$, and the horizontal
arrows are  isomorphisms  induced by the constant sections in the trivial
bundle  $\underline{M_\lambda}$.
%What are the horizontal arrow and what are the module structure? 
%The bundles $A^{p,q}\otimes\underline{W_\lambda}\otimes \underline{V_\lambda}$ and 
%$A^{p,q+1}\otimes\underline{W_\lambda}\otimes \underline{V_\lambda}$ are fiberwise $V_{\lambda}$-modules. Since  $\underline{V_\lambda}$ is trivial,
A $V^G$-module structure on all spaces is obtained by using constant sections in the bundle $\underline{V^G}$ and the fiberwise action on the last tensor factor.

For a local holomorphic frame $(\Phi_j)$ of a bundle $E$ and a form $\alpha$, the operator $\partial^{q}_E$ is defined by
$ \partial^{q}_E(\alpha\otimes \Phi_j)=  (\partial^{q}\alpha)\otimes \Phi_j$. Thus we see that the above diagram is commutative in the sense of
$V^G$-modules and we have the induced $V^G$-module maps on the level of cohomology. 
%It is also clear that the  vertical arrows on on the level of cohomology are isomorphisms.
\qed

\bigskip

For the complex elliptic genus we will take  
$$V=
\left(
\bigotimes_{n \in \Z_{\geq 0}+\frac{1}{2}}(\Lambda_{yq^n}({\C}^*)\otimes \Lambda_{y^{-1}q^n}( {\C}) )
 \otimes 
\bigotimes_{n=1}^\infty S_{q^n}({\C}\oplus   {\C})
 \right)^{\otimes d}$$
and 
$$M=\left(
\Lambda_{y}( {\bf C}^*) \otimes 
\bigotimes_{n =1}^\infty(\Lambda_{yq^n}({\C}^*)\otimes \Lambda_{y^{-1}q^n}( {\C})) \otimes 
\bigotimes_{n=1}^\infty S_{q^n}({\C}\oplus  {\C}^*) 
 )\right)^{\otimes d}$$
as vertex operator superalgebra and module, resp., for $G$ we take $GL(d,\C)<{\rm Aut}(V)$ and for $P$ we take the holomorphic 
$GL(d,\C)$-principal bundle associated to the tangent bundle of the complex manifold $X$.

Thus $\chi_y(q,{\cal L}X):=(-y)^{-d/2}\chi(X,\underline{M})$ has the structure of a virtual $V^{GL(d,\C)}$-module if we verify
our assumptions on $M$.
If the structure group of $P$ is reduced to a smaller subgroup $G$ of $GL(d,\C)$ --- for example if $X$ is a complex
manifold with trivial canonical line bundle ---  $\chi_y(q,{\cal L}X)$ will be a virtual module
for the larger vertex operator superalgebra $V^G\supset V^{GL(d,\C)}$.

\smallskip

\begin{rem} 1.~In a differential geometric setting it was shown by 
H.~Tamanoi~\cite{Ta1,Ta2} that the level two elliptic genus of spin manifolds is a vertex operator algebra module over 
the vertex operator superalgebra formed by the parallel sections in a certain vertex superalgebra bundle $\underline{V}$ 
which can be identified with $V^G$ where $G$ is the holonomy group.

2.~In~\cite{BoLi} it was observed that the complex elliptic genus of complex manifolds is 
the graded dimension of a vertex algebra defined as the cohomology of the chiral de Rham complex~\cite{MSV}.
It was shown in~\cite{MSV} that a global holomorphic volume form defines an $\N2$ superconformal structure on the space of sections of the chiral de Rham complex and its cohomology. In~\cite{BHS} the authors considered hyperk\"ahler manifolds and showed 
that the chiral de Rham complex has an $\N4$ superconformal structure of central
charge $6d$ where $d$ is the complex dimension of the manifold. 
For the elliptic genus, this has also been investigated in the preprint~\cite{Zhou}.
In~\cite{H2} it was shown that this superconformal 
structure can be decomposed into two commuting $\N4$ superconformal structures of central charge $3d$. In both cases
the superconformal structures depend on the hyperk\"ahler metric. 
It is not clear if these superconformal structures can be extended to a vertex algebra isomorphic to $V^G$.
For the character considerations in the present
paper, it is sufficient to consider only the above established $V^G$-module structure for the elliptic genus.
%Since in Mathieu Moonshine we are
%mainly interested in the characters of vertex algebras we only consider the above simpler approach.
\end{rem}

%The complex elliptic genus for complex manifolds with vanishing first rational Chern class is 
%the graded dimension of a $\N2$ vertex algebra defined by the chiral De Rham complex~\cite{MSV}.
%For hyper K\"ahler manifolds the  $\N2$ super Virasoro algebra can  be extended to the $\N4$ super Virasoro algebra~\cite{EHKZ}.
%

\bigskip

Finally, assume that a compact group $H$ acts on $X$ by automorphisms compatible with the reduction of the structure group to $G$. 
Then   $\chi_y(q,{\cal L}X)$ is also a virtual $H$-module. More precisely, one has
$$\chi(X,\underline{M})=  \bigoplus_{\lambda \in {\rm Irr}(G)} \chi(X,\underline{W_\lambda})\otimes \underline{M_{\lambda}}$$
as $H\times V^G$-module. Thus for the computation of the equivariant elliptic genus $\chi(g;X,\underline{M})$ 
for $g \in H$ it is enough to know $\chi(g;X,\underline{W_\lambda})$ for all $\lambda \in {\rm Irr}(G)$.

%%%%%%%%%%%%%%%%%%%%%%%%%%%%%%%%%%%%%%%%%%%%%%%
\subsection{The algebra $V^G$}\label{VG}

We restrict now to the case of a K3 surface so that the complex dimension is $d=2$ and the structure group can be reduced to
$G={\rm SL}(2,\C)$. We will describe the vertex algebra $V$ and the fixed point subalgebra $V^G=V^{{\rm SU}(2)}$ in more
detail.

\medskip

The vertex algebra $V$ is generated by fermionic fields $b_1(z)$, $c_1(z)$, $b_2(z)$, $c_2(z)$ and bosonic fields 
$x_1(z)$, $y_1(z)$, $x_2(z)$, $y_2(z)$ with non-regular operator products
\begin{equation*}
 b_i(z)c_j(w)\ \sim \ \frac{\delta_{i,j}}{(z-w)},\qquad x_i(z)y_j(w)\ \sim \ \frac{\delta_{i,j}}{(z-w)^2}.
\end{equation*}
The vectors $b(z)=(b_1(z),b_2(z))^t$, $x(z)=(x_1(z),x_2(z))^t$ carry the standard representation of ${\rm SU}(2)$, while
$c(z)=(c_1(z),c_2(z))$, $y(z)=(y_1(z),y_2(z))$ carry its conjugate representation. 
Define the basic invariant fields
\begin{equation*}
 \begin{split}
 \tr_{n,m}(v,w)&=:\partial^n v_1(z) \partial^m w_1(z):+:\partial^n v_2(z) \partial^m w_2(z):,\\
{\det}_{n,m}(v,w)&=:\partial^nv_1(z) \partial^m w_2(z):-:\partial^nv_2(z) \partial^m w_1(z):,
 \end{split}
\end{equation*}
where $v_i$ and $w_j$ are the components of vectors $v$ and $w$ in $\{ b, c, x, y\}$. 
\begin{lem}\label{lem:basic}
$V^G$ is strongly generated by the basic invariant fields, i.e. $\{{\det}_{n,m}(b,b),{\det}_{n,m}(b,x),{\det}_{n,m}(x,x), {\det}_{n,m}(c,c), {\det}_{n,m}(c,y),{\det}_{n,m}(y,y),
\newline \tr_{n,m}(b,c),\tr_{n,m}(b,y),\tr_{n,m}(x,c),\tr_{n,m}(x,y)\mid n,m\in\mathbb Z_{\geq0}\}$.
\end{lem}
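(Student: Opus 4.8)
The plan is to invoke classical invariant theory for $\mathrm{SL}(2,\C)$ acting on copies of the standard and conjugate representations, translated into the language of strong generation of vertex algebras. First I would set up the state space: $V$ is a free-field vertex superalgebra generated by the modes of $b,c,x,y$, so as a vector space it is a polynomial (resp.\ exterior) algebra in the negative modes $b_{i,-n}$, $c_{i,-n}$ ($n\ge 1/2$) and $x_{i,-n}$, $y_{i,-n}$ ($n\ge 1$) applied to the vacuum. Here $b,x$ transform in the standard representation $\C^2$ of $G=\mathrm{SL}(2,\C)$ and $c,y$ in the dual $\C^{2*}$. Thus $V$, as a $G$-module, is the symmetric/exterior algebra on a direct sum of countably many copies of $\C^2\oplus\C^{2*}$, graded by conformal weight so that each graded piece is finite-dimensional.

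The key input is the first fundamental theorem (FFT) of invariant theory for $\mathrm{SL}(2,\C)$: the ring of invariants of any number of copies of $\C^2$ and $\C^{2*}$ is generated by the $G$-invariant pairings, namely the $\mathrm{GL}$-type contractions $v\!\cdot\! w$ between a standard and a dual vector (these give the $\tr$-type fields), and the determinant (symplectic) pairings $\det(v,w)$ between two standard or two dual vectors (these give the $\det$-type fields). The steps are then: (i) identify the degree-one-in-the-generators invariant \emph{fields} — these are exactly the $\tr_{n,m}$ and $\det_{n,m}$ combinations of $b,c,x,y$ listed in the statement, indexed by the derivative orders $n,m$ which encode which mode of each field is used; (ii) recall that strong generation of $V^G$ means every element of $V^G$ is a normally ordered polynomial in these fields and their derivatives applied to the vacuum; (iii) observe that a normally ordered product of the basic invariant fields, expanded in modes, produces precisely a polynomial in the $G$-invariant bilinear contractions of the free generators — and conversely, by FFT, every $G$-invariant vector in $V$ is such a polynomial, hence lies in the span of normally ordered products of the basic invariant fields. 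This last point is where one must be slightly careful: a product of contractions $(v^{(1)}\!\cdot w^{(1)})\cdots$ appearing in an invariant vector must be rewritten, up to lower-order normally ordered terms and derivatives (which are themselves again invariant and handled by induction on conformal weight), as an iterated normally ordered product $:\!T_1\,T_2\cdots\!:$ of basic invariant fields; the discrepancy terms come from reordering modes past each other and from the $(z-w)^{-1}$ and $(z-w)^{-2}$ singularities in the OPEs, but each such term has strictly smaller conformal weight or fewer generators, so the induction closes.

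The main obstacle — and the only nontrivial point beyond citing FFT — is controlling this reordering: showing that the associative algebra generated under normally ordered products by the basic invariant fields actually captures \emph{all} invariant vectors and not merely a subspace, i.e.\ that no genuinely new invariant is needed at higher conformal weight. I would handle this by a double induction, outer on conformal weight and inner on the number of free-field modes in a monomial, reducing any invariant monomial to normally ordered products of basic fields modulo terms already known to be in $V^G$ of lower complexity. One should also remark that the list in the lemma is not claimed to be minimal (there are relations among these fields, corresponding to the second fundamental theorem / Plücker-type syzygies, which are handled by the full description of $V^G$ later), only that it strongly generates. With the FFT for $\mathrm{SL}(2,\C)$ in hand, the argument is essentially a bookkeeping of conformal weights, and I would present it at that level of detail rather than grinding through the explicit OPE computations.
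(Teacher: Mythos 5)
Your proposal is correct and follows essentially the same route as the paper: the proof there is a direct appeal to Weyl's first fundamental theorem for the standard and conjugate representations of ${\rm SU}(2)$, concluding that every invariant is a normally ordered polynomial in the trace and determinant pairings. The only difference is that you spell out the normal-ordering/reordering bookkeeping (induction on conformal weight) which the paper treats as immediate.
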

\noindent{\bf Proof:}
Weyl's fundamental theorem of invariant theory for the fundamental representation of ${\rm SU}(N)$ 
\cite{We} states that all invariants can be expressed in
terms of the basic invariants, i.e.\ traces and determinants. 
Hence especially every element of $V^G$ can be expressed as a normally ordered polynomial
in above generators. \qed

\begin{lem}\label{lem:algebras}
$V^G$ contains the following subalgebras:
\begin{enumerate}
\item The simple affine vertex algebra $L_1(sl(2))$ of $sl(2)$ of level one, generated by $\tr_{0,0}(b,c)$, ${\det}_{0,0}(b,b)$, ${\det}_{0,0}(c,c)$.
\item The super Virasoro algebra $\rm{Vir}_6$ of central charge $6$, generated by $\tr_{0,0}(x,y)+(\tr_{1,0}(b,c)-\tr_{0,1}(b,c))/2$.
\item The $\N4$ super Virasoro algebra of central charge $6$, generated by $L_1(sl(2))$,  $\rm{Vir}_6$ and $\tr_{0,0}(x,c)$, $\tr_{0,0}(b,y)$,  ${\det}_{0,0}(b,x)$,  ${\det}_{0,0}(c,y)$.
%\item A non-linear super vertex algebra $sW$ with strong generators the ones of the $\N4$ super Virasoro algebra of central charge six and
%$\tr_{1,0}(x,c), \tr_{1,0}(b,y),  \newline  {\det}_{1,0}(b,x),  {\det}_{1,0}(c,y), \tr_{0,1}(b,c), \tr_{0,1}(x,y),  {\det}_{0,1}(x,x),  {\det}_{0,1}(y,y)$. 
\end{enumerate}
 \end{lem}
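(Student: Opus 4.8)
The plan is to verify each of the three claims by exhibiting explicit strong generators inside $V^G$ and checking that they close under operator product expansion into the defining relations of the respective algebra, using only the basic operator products $b_i(z)c_j(w)\sim\delta_{i,j}/(z-w)$ and $x_i(z)y_j(w)\sim\delta_{i,j}/(z-w)^2$ together with Wick's theorem for normally ordered products. For part (1), I would set $e=\det_{0,0}(b,b)=:b_1b_2-b_2b_1:$, $f=\det_{0,0}(c,c)$, and $h=\tr_{0,0}(b,c)$ (up to normalization), and compute the pairwise OPEs; the $bc$-system contractions immediately give the affine $sl_2$ relations $h(z)h(w)\sim k/(z-w)^2$, $h(z)e(w)\sim 2e(w)/(z-w)$, $e(z)f(w)\sim k/(z-w)^2+h(w)/(z-w)$, and one reads off $k=1$ from the single available double contraction between the two $b$'s and the two $c$'s. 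Simplicity of $L_1(sl_2)$ at positive integer level is standard, so the subalgebra generated is the simple quotient.

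For part (2), I would take $L(z)=\tr_{0,0}(x,y)+\tfrac12(\tr_{1,0}(b,c)-\tr_{0,1}(b,c))$ and verify $L(z)L(w)\sim \tfrac{c/2}{(z-w)^4}+\tfrac{2L(w)}{(z-w)^2}+\tfrac{\partial L(w)}{z-w}$, reading off the central charge from the fourth-order pole: the $xy$-part is a rank-two $\beta\gamma$ (symplectic boson) stress tensor contributing $c=-2$ per pair times two components with the appropriate sign conventions, while the $bc$-part is the standard weight-$(\tfrac12,\tfrac12)$ fermionic stress tensor contributing $c=1$ per $bc$-pair; summing over the two $SU(2)$-components gives $c=6$. (One must be careful here with the precise weight assignments and sign conventions; I would fix them so that $b,c$ have conformal weight $\tfrac12$ and $x,y$ have weights $0$ and $1$, matching the mode expansions implicit in the bundle $M$ of Section~\ref{ellgenus}.) I would also check that $L$ acts on the part (1) generators with the correct conformal weights ($e,f,h$ of weight $1$), which is needed for (3).

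For part (3), the remaining $\N4$ generators are the four supercurrents $\tr_{0,0}(x,c)$, $\tr_{0,0}(b,y)$, $\det_{0,0}(b,x)$, $\det_{0,0}(c,y)$, each of conformal weight $\tfrac32$; I would check that they transform in the $\mathbf{2}\oplus\bar{\mathbf 2}$ of the $sl_2$ from part (1), that their OPEs among themselves are regular in the appropriate channels and produce $L$ plus the $sl_2$-currents in the complementary channels (the $G^a(z)\bar G^b(w)\sim \cdots$ relations of the small $\N4$ algebra at $c=6$), and that $L$ and the $sl_2$-currents act on them correctly. The main obstacle is bookkeeping: getting all the normalizations, the relative signs coming from fermionic reordering, and the symplectic-boson sign conventions mutually consistent so that the numerical coefficients in the $\N4$ OPEs come out exactly as in the standard presentation at $c=6$. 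This is a finite, mechanical computation with no conceptual difficulty once the conventions are pinned down, so I would organize it by first fixing a convention table for the weights and OPEs of $b,c,x,y$, then computing the $\sim 10$ relevant OPEs, and finally matching against the known $\N4$ structure constants at $c=6$; alternatively one can cite the analogous computation in~\cite{BHS} or~\cite{H2} for the $\N4$ structure and only verify that the stated fields are the images of its generators.
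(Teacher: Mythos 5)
Your overall strategy---verifying the OPEs of the proposed generators directly from the free-field contractions---is sound and would prove the lemma, but it is a genuinely different and more self-contained route than the paper's: the paper simply records that these are well-known statements, namely that $\tr_{0,0}(b,c)$, ${\det}_{0,0}(b,b)$, ${\det}_{0,0}(c,c)$ generate a rank-one lattice vertex algebra isomorphic to $L_1(sl(2))$ (boson--fermion correspondence), and cites Ademollo et al.\ \cite{ABD} for the free-field realization of the $\N4$ super Virasoro algebra. Your explicit computation buys independence from the literature at the cost of the bookkeeping; the citation buys brevity. One concrete correction is needed before your computation goes through: the bosonic fields here are \emph{not} a $\beta\gamma$ (symplectic boson) system. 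The defining OPE $x_i(z)y_j(w)\sim \delta_{i,j}/(z-w)^2$ has a second-order pole, so each pair $(x_i,y_i)$ is a rank-two Heisenberg pair of currents, both of conformal weight $1$ under $\tr_{0,0}(x,y)$ (as stated later in the paper, $\Delta(x_i)=\Delta(y_i)=1$), contributing $c=2$ per pair rather than the ``$c=-2$ per pair'' and weights $(0,1)$ you propose. With two such pairs ($c=4$) and two weight-$\frac{1}{2}$ fermion pairs ($c=1$ each) the total is indeed $6$, so the conclusion survives, but the accounting as you wrote it would not reproduce it. The rest of the plan---level $1$ from the double contraction in $e(z)f(w)$, simplicity of the affine subalgebra because $:b_ib_i:=0$ kills the singular vector $(e_{-1})^2\mathbf 1$ of $V^1(sl(2))$, and the four weight-$\frac{3}{2}$ supercurrents transforming in $\mathbf 2\oplus\bar{\mathbf 2}$ of the $sl(2)$ from part (1)---is correct.
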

\noindent{\bf Proof:}
These are well-known statements: $\tr_{0,0}(b,c)$, ${\det}_{0,0}(b,b)$, ${\det}_{0,0}(c,c)$ generate a rank one lattice vertex algebra
isomorphic to the simple affine vertex algebra $L_1(sl(2))$ of $sl(2)$ of level one.
The realization of the $\N4$ super Virasoro algebra in terms of these free fields appeared in \cite{ABD}. \qed

\begin{rem}\rm
The universal $\N4$ super Virasoro algebra at central charge $6$ is the quantum Hamiltonian reduction for a non-principal nilpotent element of the universal 
affine vertex superalgebra
$V_{-2}(psl(2|2))$ of $psl(2|2)$ at level minus two. This is a non-generic position and hence the reduced algebra is not simple \cite{HS}.
To our knowledge it is not proven whether the subalgebra of $V$ generated by the fields of point 3 of Lemma \ref{lem:algebras}  describe the simple quotient of the $\N4$ super Virasoro algebra at central charge $6$.
\end{rem}
\begin{rem}\rm
The conformal dimension $\Delta$ with respect to the Virasoro field $\tr_{0,0}(x,y)+(\tr_{1,0}(b,c)-\tr_{0,1}(b,c))/2$ are $\Delta(b_i)=\Delta(c_i)=1/2$ and 
$\Delta(x_i)=\Delta(y_i)=1$. So that the basic invariants have conformal dimension $\Delta(\tr_{n,m}(v,w))= \Delta(\det_{n,m}(v,w))=n+m+\Delta(v)+\Delta(w)$.
\end{rem}
\noindent We define the set 
\begin{equation*}
\begin{split}
S=\{&{\det}_{0,0}(b,b),{\det}_{0,0}(b,x), {\det}_{0,0}(c,c), {\det}_{0,0}(c,y),
\tr_{0,0}(b,c),\tr_{0,0}(b,y),\\
&\tr_{0,0}(x,c),\tr_{0,0}(x,y),
{\det}_{1,0}(b,x),{\det}_{1,0}(x,x), {\det}_{1,0}(c,y),{\det}_{1,0}(y,y),\\
&\tr_{1,0}(b,c),\tr_{1,0}(b,y),\tr_{1,0}(x,c),\tr_{0,0}(x,y)\}  
\end{split}
\end{equation*}
of $16$ fields containing the eight strong generators of the $\N4$ super Virasoro algebra and eight additional 
fields of conformal dimension $2$, $5/2$, $5/2$, $5/2$, $5/2$, $3$, $3$, $3$.
\begin{lem}\label{lem:rel}
Every basic invariant is a normally ordered polynomial of the fields in $S$ and their derivatives. 
\end{lem}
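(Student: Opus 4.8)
The plan is to show that the $16$ fields in $S$ already generate, under normally ordered products and derivatives, the full family of basic invariants $\tr_{n,m}(v,w)$ and $\det_{n,m}(v,w)$ produced by Lemma~\ref{lem:basic}. By Lemma~\ref{lem:basic} it suffices to treat the basic invariants; and by induction on $n+m$ it suffices to express each $\tr_{n,m}$, $\det_{n,m}$ with $n+m\geq 1$ (and those with $n=m=0$ not already listed in $S$, such as $\det_{0,0}(x,x)$, $\det_{0,0}(y,y)$, $\tr_{0,0}(b,c)$ already in $S$) in terms of basic invariants of strictly smaller total order together with the elements of $S$. The mechanism is the following: for a single free field $f$ one has the OPE/normal-ordering identity relating $:\!f\, \partial^{k}g\!:$ to derivatives of $:\!f\,\partial^{k-1}g\!:$ and lower terms; applied componentwise and summed over the $SU(2)$-index, Leibniz for $\partial$ gives relations of the schematic shape
\begin{equation*}
\partial\,\tr_{n,m}(v,w)\ =\ \tr_{n+1,m}(v,w)+\tr_{n,m+1}(v,w),
\end{equation*}
and similarly for $\det$. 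Hence, modulo total derivatives of lower invariants, one may always lower one of the two indices; so every basic invariant lies in the algebra generated by $\{\tr_{0,0},\det_{0,0},\tr_{1,0},\det_{1,0}\}$ of all field-pairs, plus derivatives.

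The next step reduces the order-$(0,0)$ and order-$(1,0)$ generators not in $S$ to $S$. For the order-$(0,0)$ pieces: the fields $b,c$ generate $L_1(sl(2))$ by Lemma~\ref{lem:algebras}(1), inside which $\det_{0,0}(b,b)$, $\det_{0,0}(c,c)$, $\tr_{0,0}(b,c)$ are the $sl(2)$-currents and their normally ordered quadratic combinations close; in particular any $\tr_{0,0}$ or $\det_{0,0}$ built from two of $\{b,c\}$ is a normally ordered polynomial in the three currents, all of which are in $S$. The mixed order-$(0,0)$ invariants involving one $b/c$ and one $x/y$, namely $\tr_{0,0}(b,y)$, $\tr_{0,0}(x,c)$, $\det_{0,0}(b,x)$, $\det_{0,0}(c,y)$, are precisely the four extra $\N4$ generators listed in Lemma~\ref{lem:algebras}(3) and are in $S$; the purely $x/y$ ones are $\tr_{0,0}(x,y)\in S$ and $\det_{0,0}(x,x)$, $\det_{0,0}(y,y)$, which must be obtained as normally ordered products of the $(0,0)$-fields in $S$ — here I would use the $SU(2)$ classical relation $\det(x,x)\cdot(\text{something})$, more precisely a Plücker-type identity $:\!x_1 x_2\!:$ is recovered from $:\!x_i y_j\!:$ and $\det(b,x)$, $\det(c,y)$ combined with the $L_1(sl(2))$ currents, i.e.\ the quadratic syzygies among the basic invariants in the $SU(2)$ first fundamental theorem. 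For the order-$(1,0)$ generators: $\det_{1,0}(b,b)$ and $\det_{1,0}(c,c)$ are derivatives, since $\partial\det_{0,0}(b,b)=2\det_{1,0}(b,b)$ (the two index-shifts coincide by antisymmetry), and likewise $\tr_{1,0}(b,c)+\tr_{0,1}(b,c)=\partial\tr_{0,0}(b,c)$ with $\tr_{0,1}(b,c)$ expressible via $\tr_{1,0}(b,c)\in S$ up to the Virasoro field of Lemma~\ref{lem:algebras}(2); the remaining order-$(1,0)$ mixed and pure-$x/y$ fields are either directly in $S$ ($\det_{1,0}(b,x)$, $\det_{1,0}(x,x)$, $\det_{1,0}(c,y)$, $\det_{1,0}(y,y)$, $\tr_{1,0}(b,y)$, $\tr_{1,0}(x,c)$) or reduce to them by the same derivative/syzygy bookkeeping ($\det_{1,0}(c,c)$, $\det_{1,0}(b,b)$, $\tr_{1,0}(c,y)$, etc., the latter via $\partial\,\tr_{0,0}(c,y)$ and $\tr_{0,1}(c,y)$).

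Putting this together: induction on $n+m$ using the $\partial$-identities moves everything into order $\le 1$ in one slot; a second reduction (antisymmetry for $\det$, the relation $\partial\tr_{0,0}=\tr_{1,0}+\tr_{0,1}$ for $\tr$) moves it into the listed $(n,m)\in\{(0,0),(1,0)\}$ cases; and the $SU(2)$ quadratic syzygies plus the explicit membership list for $S$ finish the base case. \textbf{The main obstacle} I anticipate is the base case, specifically expressing the non-listed low-order invariants — above all $\det_{0,0}(x,x)$ and $\det_{0,0}(y,y)$, and the $\tr_{0,0}$'s built from a repeated $b/c$ — as \emph{normally ordered} polynomials in $S$: classically this is the second fundamental theorem for $SU(2)$ (the ideal of relations among traces and $2\times2$ determinants), but in the vertex-algebra setting one must track normal-ordering rearrangement terms, which are themselves lower-order basic invariants, so one has to verify that the induction closes and that no genuinely new generator (e.g.\ a field of conformal dimension $>3$) is forced to appear. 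I expect this to come down to a finite, if slightly tedious, check of OPEs among the $16$ fields of $S$, which is exactly the computation the subsequent sections of the paper presumably carry out.
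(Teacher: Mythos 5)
There is a genuine gap, and it sits at the heart of your argument. The step ``modulo total derivatives of lower invariants, one may always lower one of the two indices; so every basic invariant lies in the algebra generated by the $(0,0)$ and $(1,0)$ pieces plus derivatives'' is false. The Leibniz identity $\partial\,\tr_{n,m}(v,w)=\tr_{n+1,m}(v,w)+\tr_{n,m+1}(v,w)$ only lets you normalize \emph{within} a fixed total degree $N=n+m$: it shows each $\tr_{n,m}$ with $n+m=N$ is a linear combination of $\tr_{0,N}$ and derivatives of invariants of total degree $<N$, but it never eliminates $\tr_{0,N}$ itself. For every $N$ there is one genuinely new linear generator $\tr_{0,N}(v,w)$ (resp.\ $\det_{0,N}(v,w)$) that is not a derivative of anything of lower degree, so your induction on $n+m$ does not close, and no amount of antisymmetry or classical $SU(2)$ syzygy bookkeeping at degrees $0$ and $1$ can repair this. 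The content of the lemma is precisely that these infinitely many linearly independent fields are nevertheless \emph{nonlinear} normally ordered polynomials in the sixteen fields of $S$, and that requires an input your proposal does not supply. (You also spend effort on $\det_{0,0}(x,x)$ and $\det_{0,0}(y,y)$, which vanish identically since the $x_i$ mutually commute and have regular OPEs with each other; that is why they are absent from $S$.)

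The paper's proof gets the missing input from the Frenkel--Kac--Radul--Wang theorem: the $\mathfrak{gl}(1)$-invariants of the $bc$-system, i.e.\ the fields $\tr_{n,m}(b,c)$, form the simple $\mathcal W_{1+\infty}$-algebra of central charge two, which is \emph{strongly} generated by the two fields $\tr_{0,0}(b,c)$ and $\tr_{0,1}(b,c)$. This is a nontrivial statement about the simple quotient (a ``decoupling'' of the higher weight generators), not a formal consequence of derivatives or OPE rearrangement. Writing $\tr_{0,n}(b,c)=P_{0,n}(\tr_{0,0}(b,c),\tr_{0,1}(b,c))$, one then transports strong generation to the other nine families by applying first-order pole operators $\circ_1$ with low-weight elements of $S$, e.g.\ $\tr_{0,0}(x,c)\circ_1\tr_{0,n}(b,c)=\tr_{0,n}(x,c)$ and $\det_{0,0}(b,x)\circ_1\tr_{0,n}(x,c)=\det_{0,n}(x,x)$; since $\circ_1$ acts by derivations-like rules on normally ordered polynomials, the result is again a normally ordered polynomial in finitely many low-weight invariants, and the induction on $n+m$ you envisage then does close. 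If you want to salvage your approach, you must either cite this $\mathcal W_{1+\infty}$ decoupling result or prove an equivalent statement; without it the base case is not where the difficulty lies --- the unbounded tower of new generators $\tr_{0,N}$, $\det_{0,N}$ is.
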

\noindent{\bf Proof:}
The idea of the proof is as for Theorem~7.1 of \cite{CL}, which uses \cite{FKRW}. 
It is well-known (see e.g.\ Section~7 of \cite{CL}), that $\tr_{0,0}(b,c)$ and $\tr_{0,1}(b,c)$ strongly
generate the simple $\mathcal W_{1+\infty}$ algebra of central charge two. Especially, every $\tr_{n,m}(b,c)$
is expressible as a normally ordered polynomial in $\tr_{0,0}(b,c)$ and $\tr_{0,1}(b,c)$ and their derivatives:
\begin{equation*}%\label{eq:relation}
 \tr_{n,m}(b,c)=P_{n,m}(\tr_{0,0}(b,c),\tr_{0,1}(b,c)). 
\end{equation*} 
Let $\circ_m$ denote the $m$-th order pole contribution in the operator product algebra.
Then
\begin{align*}\label{eq:firstpole}
 \tr_{0,0}(x,c)\circ_1 \tr_{0,n}(b,c)&=\tr_{0,n}(x,c),  &\hspace*{-1.3cm}{\det}_{0,0}(b,x)\circ_1 \tr_{0,n}(x,c)&={\det}_{0,n}(x,x),\\
\tr_{0,0}(y,b)\circ_1 \tr_{n,0}(b,c)&=-\tr_{0,n}(y,b),  &\hspace*{-1.3cm}{\det}_{0,0}(c,y)\circ_1 \tr_{0,n}(y,b)&={\det}_{0,n}(y,y),\\
{\det}_{0,0}(b,b)\circ_1 \tr_{n,0}(b,c)&=-2 {\det}_{0,n}(b,b), &\hspace*{-1.3cm}{\det}_{0,0}(c,c)\circ_1 \tr_{0,n}(b,c)&=2 {\det}_{0,n}(c,c),\\
{\det}_{0,0}(b,b)\circ_1 \tr_{0,n}(x,c)&= -2{\det}_{0,n}(x,b), &\hspace*{-1.3cm}{\det}_{0,0}(c,c)\circ_1 \tr_{0,n}(y,b)&= -2{\det}_{0,n}(y,c),\\
\tr_{0,0}(x,c)\circ_1 \tr_{0,n}(y,b)&= \tr_{0,n}(y,x)+\tr_{1,n}(c,b).
\end{align*}
We consider the example of $\tr_{m,n}(x,c)$ and $\det_{m,n}(x,x)$. 
It follows from above the equation that
\begin{equation*}
\begin{split}
\tr_{0,n}(x,c)&=  \tr_{0,0}(x,c)\circ_1 \tr_{0,n}(b,c)=\tr_{0,0}(x,c)\circ_1P_{0,n}(\tr_{0,0}(b,c),\tr_{0,1}(b,c))\\
&=Q_{0,n}(\tr_{0,0}(b,c),\tr_{0,0}(x,c),\tr_{0,1}(b,c),\tr_{0,1}(x,c))
\end{split}
\end{equation*}
for some polynomial $Q_{0,n}$ in $tr_{0,0}(b,c)$, $\tr_{0,0}(x,c)$, $\tr_{0,1}(b,c)$, $\tr_{0,1}(x,c)$ and their derivatives. 
{}From this we get 
\begin{equation*}
\begin{split}
{\det}_{0,n}(x,x)&=  {\det}_{0,0}(b,x)\circ_1 \tr_{0,n}(x,c)\\
&={\det}_{0,0}(b,x)\circ_1Q_{0,n}(\tr_{0,0}(b,c),\tr_{0,0}(x,c),\tr_{0,1}(b,c),\tr_{0,1}(x,c))\\
&=R_{0,n}(\tr_{0,0}(b,c),\tr_{0,0}(x,c),{\det}_{0,0}(b,x),\tr_{0,1}(b,c),\tr_{0,1}(x,c),{\det}_{0,1}(b,x))
\end{split}
\end{equation*}
for some polynomial $R_{0,n}$ in $\tr_{0,0}(b,c)$, $\tr_{0,0}(x,c)$, ${\det}_{0,0}(b,x)$, $\tr_{0,1}(b,c)$, $\tr_{0,1}(x,c)$, ${\det}_{0,1}(b,x)$ 
and their derivatives. The claim for $\tr_{m,n}(x,c)$ and $\det_{m,n}(x,x)$ follows by induction to $N=n+m$, 
since every $\tr_{n,m}(v,w)$ (respectively ${\det}_{n,m}(v,w)$) is a linear combination of $\tr_{0,n+m}(v,w)$ 
(respectively ${\det}_{0,n+m}(v,w)$)
and the derivatives of 
the $\tr_{n',m'}(v,w)$ (respectively ${\det}_{n',m'}(v,w)$) with $n'+m'<n+m$. 

The cases of the seven remaining types of basic invariants are proven in complete analogy.
\qed

We can thus characterize $V^G$
\begin{thm}
$V^G$ is strongly generated by the $16$ fields of the set $S$.
\end{thm}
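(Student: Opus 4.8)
The plan is to obtain the statement as an immediate corollary of Lemmas~\ref{lem:basic} and~\ref{lem:rel}, once the relevant closure properties of spans of fields are in place. Recall that a subset $T$ of a vertex algebra \emph{strongly generates} it if the whole algebra is spanned, as a vector space, by the iterated normally ordered monomials $:\partial^{n_1}t_1\cdots\partial^{n_k}t_k:$ with $t_i\in T$, $n_i\ge 0$, read with some fixed nesting convention. Write $\langle T\rangle$ for the smallest subspace of $V$ containing $T$ and closed under $\partial$ and under the normally ordered product; then $T$ strongly generates the vertex subalgebra $\langle T\rangle$, and the content of a theorem of this type is the equality $\langle S\rangle=V^G$.

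First I would record that $\langle S\rangle$ is in fact a vertex subalgebra of $V$ and is contained in $V^G$. Containment in $V^G$ is clear, since each of the $16$ fields in $S$ is manifestly ${\rm SU}(2)$-invariant. That $\langle S\rangle$ is a vertex subalgebra — i.e.\ that it is also closed under the products $a_{(n)}b$ for $n\ge 0$ — is the standard rearrangement argument: using the formulas expressing $a_{(n)}b$ with $n\ge 0$ through normally ordered products and derivatives, together with quasi-associativity
\[
:(:ab:)c:\;-\;:a(:bc:):\;=\;\sum_{j\ge 0}\Bigl(:(\partial^{(j+1)}a)\,(b_{(j)}c):\;+\;:(\partial^{(j+1)}b)\,(a_{(j)}c):\Bigr),
\]
one reduces, by induction on conformal weight, any normally ordered polynomial in a set of fields to a linear combination of monomials in the fixed nesting convention. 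I would cite this standard fact rather than reprove it.

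With this in hand the proof closes quickly. By Lemma~\ref{lem:rel}, every basic invariant field $\tr_{n,m}(v,w)$ and $\det_{n,m}(v,w)$ is a normally ordered polynomial in the fields of $S$ and their derivatives, hence lies in $\langle S\rangle$. Since $\langle S\rangle$ is closed under $\partial$ and under the normally ordered product, every iterated normally ordered monomial in basic invariants and their derivatives also lies in $\langle S\rangle$; but by Lemma~\ref{lem:basic} the space $V^G$ is spanned by exactly such monomials. Therefore $V^G\subseteq\langle S\rangle\subseteq V^G$, so $V^G=\langle S\rangle$, which is the assertion that $S$ strongly generates $V^G$. The only genuine (and routine) obstacle is the rearrangement step of the previous paragraph, needed precisely to upgrade ``generated as a vertex algebra by $S$'' to ``strongly generated by $S$''; everything else is a formal consequence of the two preceding lemmas.
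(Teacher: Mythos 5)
Your proof is correct and follows essentially the same route as the paper's: the theorem is deduced directly by combining Lemma~\ref{lem:basic} (every field of $V^G$ is a normally ordered polynomial in the basic invariants and their derivatives) with Lemma~\ref{lem:rel} (every basic invariant is a normally ordered polynomial in the fields of $S$ and their derivatives). The only difference is that you make explicit the standard quasi-associativity rearrangement needed to justify substituting one normally ordered polynomial into another, which the paper leaves implicit.
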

\noindent{\bf Proof:}
Lemma \ref{lem:basic} says that every field of $V^G$ is a normally ordered polynomial of the basic invariants
and their derivatives, which
in turn are by Lemma \ref{lem:rel} themselves normally ordered polynomials in the fields of the set $S$
and their derivatives.  
\qed

\medskip

%The following result is proven similar.
Going back to general dimension $d$ and the corresponding $V$ as at the end of the last subsection one has:
\begin{rem}
\begin{itemize}
\item
For $G={\rm SL}(d,\C)$ the vertex superalgebra $V^G$ contains the $\N2$ super Virasoro  algebra of central charge $3d$ as
subalgebra (special K\"ahler manifolds).
\item For $G={\rm Sp}(d',\C)$ the vertex algebra $V^G$ contains the $\N4$ super Virasoro algebra of central charge $6d'=3d$ as
subalgebra (hyperk\"ahler manifolds).
\end{itemize}
\end{rem}

%%%%%%%%%%%%%%%%%%%%%%%%%%%%%%%%%%%%%
\subsection{Characters for $V^G$-modules}

In this section, we consider characters for the fixed point algebra $V^G$ for the
case $G={\rm SL}(2,\C)$.

\medskip

We have the decompositions
\[ V= \bigoplus_{N=0}^\infty \lambda_N \otimes V_{\lambda_N}\quad \text{and} \quad
M = \bigoplus_{N=0}^\infty \lambda_N  \otimes M_{\lambda_N},
\]
where $\lambda_N$ is the $N+1$-dimensional irreducible representation of $G$.
The $V_{\lambda_N}$ are modules 
in the Neveu-Schwarz-sector of the vertex superalgebra $V^G$, while 
the $M_{\lambda_N}$ are in the so-called Ramond-sector. 
The two are related by an automorphism of the $\N2$ super Virasoro algebra that
is called spectral flow in physics. 
\begin{prop}
The $V^G$-modules $M_{\lambda_N}$ and $V_{\lambda_N}$ are irreducible.
\end{prop}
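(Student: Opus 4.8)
\noindent{\bf Proof (proposal):\ }
The plan is to deduce this from the vertex-algebra analogue of the double commutant theorem, applied to the pair consisting of $G$ and $V^G$ acting on the simple $V$-module $M$, and likewise on $V$ itself.

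First I would record the structural inputs. The vertex operator superalgebra $V$ is the tensor product of $d=2$ copies of the $bc$ ghost system and the $\beta\gamma$ symplectic boson system, hence is simple; and $M$ is a simple canonically twisted (Ramond) $V$-module, because the only modes of $M$ which fail to act freely are the fermionic zero modes of the $bc$ factors, acting on the two-dimensional factor $\Lambda_y(\C^*)$ as an irreducible Clifford module. The group $G={\rm SU}(2)$, for which $V^G=V^{{\rm SL}(2,\C)}$, acts on $V$ and on $M$ by automorphisms with $g\,u_{(n)}\,g^{-1}=(g u)_{(n)}$ and preserves the finite-dimensional weight spaces; being compact it acts completely reducibly, so the decompositions $V=\bigoplus_{N\geq0}\lambda_N\otimes V_{\lambda_N}$ and $M=\bigoplus_{N\geq0}\lambda_N\otimes M_{\lambda_N}$ are genuine decompositions as $G\times V^G$-modules, and every $\lambda_N$ occurs (for instance $\mathrm{Sym}^N$ of the modes of $x$ already contains a copy of $\lambda_N$).

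Next I would introduce the associative algebra $\mathcal A$ of operators on $M$ generated by all modes $u_{(n)}$, $u\in V$. Its invariant subspaces are precisely the $V$-submodules of $M$, so $M$ is a simple $\mathcal A$-module, and since $\mathrm{End}_{\mathcal A}(M)=\C$ (Dixmier's lemma), $\mathcal A$ is dense in $\mathrm{End}_\C(M)$ for the finite topology by the Jacobson density theorem. Because $\mathcal A$ is a union of finite-dimensional $G$-stable subspaces, the averaging map $a\mapsto\int_G g\,a\,g^{-1}\,dg$ sends $\mathcal A$ onto $\mathcal A^{G}$, and a short argument then promotes density to: $\mathcal A^{G}$ is dense in $\mathrm{End}_\C(M)^{G}=\mathrm{End}_G(M)=\prod_{N}\mathrm{End}_\C(M_{\lambda_N})$ --- given $T$ in the target and a finite-dimensional $G$-stable $W\subseteq M$, one picks $a\in\mathcal A$ agreeing with $T$ on $W$ and replaces it by its $G$-average, which still agrees with $T$ on $W$ because $T$ is $G$-equivariant. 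Hence $\mathcal A^{G}$ acts irreducibly on each $M_{\lambda_N}$, and inequivalently for different $N$; the same argument with $V$ in place of $M$ gives the corresponding statements for the $V_{\lambda_N}$.

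The one remaining point --- and the part I expect to be the real obstacle --- is to identify $\mathcal A^{G}$ with the mode algebra of $V^G$, that is, the algebra of operators on $M$ generated by the modes of the elements of $V^G$: granting this, "irreducible over $\mathcal A^G$" is precisely "irreducible as a $V^G$-module". The inclusion of the mode algebra of $V^G$ into $\mathcal A^{G}$ is immediate, and by Lemmas~\ref{lem:basic} and~\ref{lem:rel} the mode algebra of $V^G$ is already generated by the modes of the $16$ fields of $S$. For the reverse inclusion one must show that a $G$-invariant operator $\int_G (g u^1)_{(n_1)}\cdots(g u^k)_{(n_k)}\,dg$ --- an average of products of modes of non-invariant fields --- is a normally ordered polynomial in the modes of invariant fields; this is the vertex-algebra counterpart of the classical statement that the invariants of the mode algebra are generated by the modes of the invariant fields, i.e.\ the technical core of the Dong--Li--Mason theory of compact automorphism groups, which I would adapt here to the super and twisted-module setting using the strong finite generation of $V$ and of $V^G$. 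As a more computational alternative one could instead realise each $M_{\lambda_N}$ and $V_{\lambda_N}$ concretely --- using that $V^G$ contains the $\mathcal{N}=4$ super Virasoro algebra at $c=6$ --- and verify irreducibility by a singular-vector computation, but the dual-pair route is the one I would try first.
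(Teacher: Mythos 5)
Your argument is essentially the paper's: the proof given there is a single sentence invoking Theorem~1.1 and Remark~1.1 of \cite{KR}, which is precisely the mode-algebra duality statement you reconstruct via Jacobson density plus averaging over the compact group. The step you single out as the real obstacle --- that the $G$-invariants of the mode algebra act through the modes of $V^G$ --- is indeed the substantive content hidden in that citation, but in the present free-field situation it follows from the same input as Lemma~\ref{lem:basic}: the modes generate a Weyl--Clifford algebra on which ${\rm SL}(2,\C)$ acts through copies of the standard representation and its dual, so Weyl's first fundamental theorem \cite{We} writes every invariant of the mode algebra as a polynomial in the quadratic contractions, and these are exactly modes of the invariant fields $\tr_{n,m}(v,w)$ and ${\det}_{n,m}(v,w)$. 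With that observation your proof closes up using no ideas beyond those already in Section~\ref{VG}, so it is correct and follows the same route as the paper.
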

\begin{proof}
The mode algebra of any vertex operator super algebra is associative, and $V$ and $M$ are simple $V$-modules so that the statement follows by Theorem 1.1 of \cite{KR} together with Remark 1.1 of \cite{KR}.
\end{proof}
A character of a module $W$ is defined as
\[ 
\text{ch}_W(y;q)=\text{tr}_W(q^{L_0-\frac{1}{4}}y^{J_0}),
\]
where $L_0$ is the zero-mode of the Virasoro subalgebra and $J_0$ the zero-mode of the Heisenberg subalgebra.
Between the characters of $V$ and $M$ one has the identity $\text{ch}_M(y;q)=q^{1/4}y\,\text{ch}_V(yq^{1/2};q)$.
In the case of the $\N4$ super Virasoro algebra, it is believed~\cite{ET} that modules fall into a generic family 
of irreducible modules parameterized by conformal weight $h$, called typical or massive, with character
\[
 \text{ch}_h(y;q)=q^{h-3/8}\frac{\vartheta_3(y;q)^2}{\eta(q)^3}.
\]
For $h=1/4$ the module is not irreducible and the character is given by the one of three atypical or massless modules 
\[ 
 \text{ch}_{1/4}(y;q)=2\,\text{ch}_{1/4,0}(y;q)+\text{ch}_{1/4,1}(y;q)
\]
with
\[
 \text{ch}_{1/4,0}(y;q)= \frac{\vartheta_3(y;q)}{\eta(q)^3}%\sum_{\alpha\in \Z+\frac{1}{2}}
\sum_{\alpha\in\Z+\frac{1}{2}}\frac{y^{\alpha+1/2}q^{\alpha(\alpha-1)/2}}{1+yq^{\alpha}}.
\]
\begin{rem}
The massive modules and their characters have also been computed by Kac and Wakimoto \cite{KW2} via quantum Hamiltonian reduction. However irreducibility is not proven.
\end{rem}
%We will show:
\begin{thm}\label{holonomydecomp}
The character $\mathrm{ch}_{V_N}(y;q)$ decomposes into a sum  of atypical (massless) characters $\mathrm{ch}_N^a(y;q)$
and a sum of typical (massive characters) $\mathrm{ch}_N^t(y;q)$ of the $\N4$ super Virasoro algebra of central charge six, where
\begin{equation*}
\begin{split}
\mathrm{ch}_{V_N}(y;q)&= \mathrm{ch}_N^a(y;q)+\mathrm{ch}_N^t(y;q)\\
\mathrm{ch}_N^a(y;q)&= \frac{\vartheta_3(y;q)}{\eta(q)^3}\sum_{\alpha\in\mathbb Z+\frac{1}{2}}\frac{y^{\alpha+1/2}q^{\alpha(\alpha-1)/2}}{1+yq^{\alpha}}\times
\begin{cases}-2 &\mathrm{if}\ N=0, \\ \ \ 1 &\mathrm{if}\ N=1, \\ \ \ 0 &\mathrm{else} \end{cases} \\
\mathrm{ch}_N^t(y;q)&= \frac{\vartheta_3(y;q)^2}{\eta(q)^3}(h_{N}(q)-2h_{N+1}(q)+2h_{N+3}(q)-h_{N+4}(q))\\
h_{N+1}(q)&=\frac{1}{\eta(q)^{3}}\sum_{\substack{m,r,s\in\mathbb Z+\frac{1}{2}\\ r,s>0}}(-1)^{r+s+1}
q^{r|m|+s|N-m|+(\mathrm{sgn}(m)r+\mathrm{sgn}(m-N)s)^2/2-N/2}.
\end{split}
\end{equation*}
\end{thm}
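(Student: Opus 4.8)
The plan is to compute $\mathrm{ch}_{V_N}(y;q)$ explicitly from the free-field realization of $V$ and then recognize the answer as a combination of $\mathcal N=4$ characters. First I would extract $\mathrm{ch}_{V_N}$ by projecting the full character $\mathrm{ch}_V(y;q)=\mathrm{tr}_V(q^{L_0-1/4}y^{J_0})$ onto the isotypic component of the $(N+1)$-dimensional irreducible ${\rm SU}(2)$-representation $\lambda_N$. Since $V$ is a tensor product of $d=2$ copies of fermionic and bosonic oscillator modules carrying the standard and conjugate representations, the full character refined by an ${\rm SU}(2)$ fugacity $w$ is a product of theta- and eta-type factors; concretely one writes $\mathrm{tr}_V(q^{L_0-1/4}y^{J_0}w^{H_0})$ as an explicit infinite product and then uses the Weyl integration formula, $\mathrm{ch}_{V_N}=\oint \overline{\chi_{\lambda_N}(w)}\,\mathrm{tr}_V(\cdots)\,d\mu(w)$, equivalently expanding the product in characters $\chi_{\lambda_N}(w)=(w^{N+1}-w^{-N-1})/(w-w^{-1})$. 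This reduces the problem to a contour-integral/residue computation in $w$.

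Next I would match the result against the known $\mathcal N=4$ data recalled just above the theorem. The massive (typical) character is $q^{h-3/8}\vartheta_3(y;q)^2/\eta(q)^3$ and the massless (atypical) character $\mathrm{ch}_{1/4,0}$ involves $\vartheta_3(y;q)/\eta(q)^3$ times the Appell-type sum $\sum_{\alpha\in\Z+1/2} y^{\alpha+1/2}q^{\alpha(\alpha-1)/2}/(1+yq^\alpha)$. So after the residue computation I would organize $\mathrm{ch}_{V_N}$ into a piece proportional to $\vartheta_3(y;q)/\eta(q)^3$ times that Appell sum — which must carry the stated coefficients $-2,1,0$ for $N=0,1,\ge 2$ — plus a remainder proportional to $\vartheta_3(y;q)^2/\eta(q)^3$. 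The coefficient functions $h_{N+1}(q)$ are then read off; the particular bilateral sum over $m,r,s$ with the sign-dependent quadratic exponent is exactly what arises from expanding the bosonic $S_{q^n}$ factors and performing the $w$-residue, the $\mathrm{sgn}$ terms recording which side of the pole the residue picks up. The combination $h_N-2h_{N+1}+2h_{N+3}-h_{N+4}$ should emerge naturally from the numerator $\chi_{\lambda_N}(w)$ expressed through $w^{\pm(N+1)}$ together with the Weyl denominator, producing the shift pattern $N,N+1,N+3,N+4$ (i.e.\ from $(w^{N+1}-w^{-N-1})$ divided by $(w-w^{-1})$ interacting with a quadratic $w$-dependence).

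The main obstacle I expect is the residue bookkeeping: disentangling the contributions of the $b,c$ fermions (which produce the affine $L_1(sl(2))$ and hence, after decomposition, the massless pieces and the $\vartheta_3/\eta^3$ prefactor) from the $x,y$ bosons (which produce the massive tower and the $h_{N+1}(q)$ sums), while correctly tracking the spectral-flow shift $y\mapsto yq^{1/2}$ relating $M$ and $V$. In particular, justifying convergence and the interchange of the $w$-contour integral with the infinite products, and pinning down the signs $(-1)^{r+s+1}$ and the $\mathrm{sgn}$ conventions, requires care; the cleanest route is probably to first establish the identity at the level of the unrefined $w$-generating function $\sum_N \mathrm{ch}_{V_N}(y;q)\,\chi_{\lambda_N}(w)$ as an equality of meromorphic Jacobi-like forms, and only then extract coefficients. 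Alternatively, one can use the already-known full decomposition of the elliptic genus of K3 into $\mathcal N=4$ characters (the EOT result, $2\phi_{0,1}=\mathrm{ch}_{V_0}$-type combination) as a consistency check on the $N=0$ case, and the $\mathcal N=4$ character identities of \cite{ET} to verify the atypical/typical split is the correct one; uniqueness of the decomposition into $\mathrm{ch}_h$ for distinct $h$ then forces the stated formulas.
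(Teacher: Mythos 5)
Your overall route is the same as the paper's: refine the character of $V$ by an ${\rm SU}(2)$ fugacity $z$, project onto the $\lambda_N$-isotypic piece (your Weyl integration is equivalent to the paper's taking the difference of the $z^N$ and $z^{N+2}$ coefficients, which is where $g_N-2g_{N+1}+2g_{N+3}-g_{N+4}$ comes from, the $-2g_{N+1}$ arising from the $(1-z^{-1})^2$ Weyl-denominator factor), and then split the result into an Appell--Lerch (atypical) piece and a $\vartheta_3^2/\eta^3$ (typical) piece. So the strategy is sound. However, two steps that you flag as "the main obstacle" are precisely where the actual content of the proof lies, and your sketch does not supply the ideas needed to carry them out.

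First, the product-to-sum conversion: the refined character is an infinite product, and the residue computation in $z$ you describe is not tractable directly on the product. The paper's key input is the denominator identity of the affine Lie superalgebra $\widehat{sl}(2|1)$ (Kac--Wakimoto), which rewrites the product as
$\frac{\vartheta_3(y;q)^2}{\eta(q)^6}(1-z^{-1})^2\sum_{m,m'}\frac{z^{m+m'}}{(1+yq^{m-1/2})(1+y^{-1}q^{m'-1/2})}$,
after which the $z$-coefficient extraction is a one-line convolution producing the functions $g_N(y;q)$ with at most simple poles in $y$. Without this identity (or an equivalent), "expanding the bosonic $S_{q^n}$ factors" does not lead to the stated closed form for $h_{N+1}$. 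Second, the atypical/typical split is not a matter of matching against known characters: one must show that each $g_N$ with $N\neq 1$ is \emph{holomorphic} in $y$ (hence contributes only typical characters, with $h_N$ obtained as a theta-coefficient integral $g_{N,\ell}^{u_0}$ whose $q$-expansion gives the triple sum over $m,r,s$), while only $g_1$ is meromorphic; its polar part, computed via the Dabholkar--Murthy--Zagier residue construction, is exactly the Appell--Lerch sum, and this is what forces the coefficients $-2,1,0$ for $N=0,1,\geq 2$ rather than their being "read off". Your assertion that the atypical coefficients "must" come out as stated, and that uniqueness of the $\mathcal N=4$ decomposition "forces" the formulas, presupposes a linear-independence/completeness statement for the (believed, not proven) list of irreducible $\mathcal N=4$ characters that the paper deliberately avoids relying on; the paper instead proves the identity directly as an equality of $q,y$-series. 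Finally, your proposed $N=0$ consistency check against the K3 elliptic genus conflates $V_0$ (Neveu--Schwarz sector) with the Ramond-sector modules $M_n$ entering the genus, and the genus involves all $n$ weighted by $\chi(X,S^nT)$, so it does not isolate the $N=0$ case.
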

Before proving the theorem, we remark: 
\begin{rem}
The Ramond-characters are immediately obtained via $\text{ch}_{M_N}(y;q)=q^{1/4}y\,\text{ch}_{V_N}(yq^{1/2};q)$. 
Let $\text{ch}^R_h(y;q)=q^{h-3/8}\frac{\vartheta_2(y;q)}{\eta(q)^3}$ be the Ramond-character of conformal weight $h$ of the $\N4$ super Virasoro algebra. 
Expanding the $h_N(q)$ one can explicitly compute the multiplicities of the modules, and the result is of the form
\begin{equation*}
\begin{split}
  \mathrm{ch}_{V_N}(y;q) &=\mathrm{ch}_N^a(y;q)+\sum_{h\in\mathbb Z/4} \text{mult}(N,h)\,\text{ch}_h(y;q),\\
 \mathrm{ch}_{M_N}(y;q) &=\mathrm{ch}_N^{R,a}(y;q)+\sum_{h\in\mathbb Z/4} \text{mult}(N,h)\,\text{ch}^R_h(y;q),
\end{split}
\end{equation*}
with the same multiplicities $\text{mult}(N,h)$ in the Ramond- and NS-sector. 
On the other hand, from the explicit form of $M$, it is clear that only states with conformal dimension in $\Z+1/4$ appear implying
$\text{mult}(N,h)\neq 0$ only if $h\in\Z+1/4$.
\end{rem}
\noindent{\bf Proof:}
The proof of this theorem uses the ideas of chapter~8 of~\cite{DMZ}, where
meromorphic Jacobi forms have been decomposed into Appell-Lerch sums and holomorphic Jacobi forms, see also~\cite{BF}.

The graded character of $V$, graded by the natural Virasoro zero-mode $L_0$, by the charge $f$ of the group $\rm{SL}(2,\C)$
and by the zero-mode $J_0$ of the level one affine $sl(2)$ subalgebra is for $0<|q|<1$ and $|q|<|z|<1$
\begin{equation*}
\begin{split}
\text{ch}_V(z,y;q)&= \text{tr}_V(q^{L_0-\frac{1}{4}}z^fy^{J_0})\\
&=q^{-\frac{1}{4}} \prod_{n=1}^\infty \frac{(1+zyq^{n-\frac{1}{2}})(1+zy^{-1}q^{n-\frac{1}{2}})(1+z^{-1}yq^{n-\frac{1}{2}})(1+z^{-1}y^{-1}q^{n-\frac{1}{2}})}{(1-zq^n)^2(1-z^{-1}q^n)^2}\\
&= \frac{\vartheta_3(y;q)^2}{\eta(q)^6}(1-z^{-1})^2\sum_{m,m'\in\mathbb Z}\frac{z^{m+m'}}{(1+yq^{m-\frac{1}{2}})(1+y^{-1}q^{m'-\frac{1}{2}})}.
\end{split}
\end{equation*}
Here, the last identity follows directly from the denominator identity of the affine Lie superalgebra of $sl(2|1)$ \cite{KW1}.
By defining 
$$g_N= \frac{\vartheta_3(y;q)}{\eta(q)^3}\sum_{m\in\mathbb Z}\frac{1}{(1+yq^{m-\frac{1}{2}})(1+y^{-1}q^{N-m-\frac{1}{2}})},$$
the character decomposes as
$$\text{ch}_V(z,y;q)= \frac{\vartheta_3(y;q)}{\eta(q)^3}\sum_{N\in\mathbb Z}z^N(g_N-2g_{N+1}+g_{N+2}).$$
We need to compute the $g_N$ in order to determine the characters of $V^G$. For this consider the decomposition
$$ \text{ch}_V(z,y;q)=\sum_{N=0}^\infty \chi_N\cdot  \text{ch}_{V_N}$$
of the character of $V$ where
$\chi_N=\sum_{i=0}^{N}z^{-N+2i}$ is the character of $\lambda_N$ and $\text{ch}_{V_N}$ the character of $V_{N}$. 
Since the coefficient of $z^N$, $N\geq 0$, contains exactly the contributions for the representations $\lambda_N$, $\lambda_{N+2}$,
$\ldots$, the character $\text{ch}_{V_N}$ equals the difference of the coefficients of $z^N$ and $z^{N+2}$, i.e.\ one has
$$\text{ch}_{V_N}=\frac{\vartheta_3(y;q)}{\eta(q)^3}(g_N-2g_{N+1}+2g_{N+3}-g_{N+4}).$$

\medskip

In order to compute $g_N$ explicitly, we follow \cite{DMZ}. 
The elliptic transformation of $g_N(u;\tau)$ for $y=e^{2\pi i u}$, $q=e^{2\pi i \tau}$ is  
$$ g_N(u+m\tau+n;\tau)=y^{-m}q^{-m^2/2}g_N(u;\tau)$$
meaning that the $g_N$ have elliptic transformation properties as a Jacobi form of index $1/2$.
We define the Fourier part of $g_N$ as follows: let 
$$g_{N,\ell}^{u_0}=q^{-\ell^2/2}\int_{u_0}^{u_0+1}g_N(u;\tau)y^{-\ell}du.$$
Then the elliptic transformation of $g_N$ implies
$$ g_{N,\ell}^{u_0+\tau}=g_{N,\ell+1}^{u_0}$$ 
and hence $h_{N}=g_{N,\ell}^{-\ell\tau}$, $\ell\in\mathbb Z$, is independent of the choice of the integer $\ell$.
Expanding $g_N$ as a power series in $q$ and $y$ allows to compute the integral for $h_{N}$. The result is 
\begin{equation*}
\begin{split}
 h_{N+1}(q)&=\frac{1}{\eta(q)^{3}}\sum_{\substack{m,r,s\in\mathbb Z+\frac{1}{2}\\ r,s>0}}(-1)^{r+s+1}
q^{r|m|+s|N-m|+(\mathrm{sgn}(m)r+\mathrm{sgn}(m-N)s)^2/2-N/2}.
\end{split}
\end{equation*}
The Fourier part of $g_N$ is now
$$g_N^F=\vartheta_3(y;\tau)h_{N}(\tau).$$
The advantage is that this immediately provides a decomposition of characters if $g_N$ is holomorphic, as in that case
$g_N^F=g_N$. Looking at the definition of $g_N$, we see that $g_1$ is meromorphic but not holomorphic, while for all $N\neq 1$ 
the $g_N$ are holomorphic. In summary, we have shown
$$ \mathrm{ch}_N= \frac{\vartheta_3(y;q)^2}{\eta(q)^3}(h_{N}-2h_{N+1}+2h_{N+3}-h_{N+4})$$
for $N\geq 2$.

\smallskip

It remains to compute $g_1$.
The poles of $g_1(y;q)$ are at the points $(-q^{\alpha-1/2};q)$ for integers $\alpha$. 
Its (weighted) residues are related using the elliptic transformations as follows. 
Let $C_\alpha$ be a small contour including only the pole at $y=-q^{\alpha-1/2}$, then we have
\begin{equation*}
\begin{split}
\text{Res}_{y=-q^{\alpha-\frac{1}{2}}}(g_1(y;q)y^{-\ell})&= \frac{1}{2\pi i}\oint_{C_\alpha}y^{-\ell}g_1(y;q)du
= \frac{1}{2\pi i}\oint_{C_0}(yq^\alpha)^{-\ell}g_1(yq^\alpha;q)du\\
&= (-1)^{\ell+\alpha} q^{\frac{1}{2}(\alpha+\ell-2\alpha\ell-\alpha^2)}\text{Res}_{y=-q^{-\frac{1}{2}}}(g_1(y;q))\\
&= \frac{(-1)^{\ell+\alpha}}{2\pi i}q^{\frac{1}{2}(\alpha+\ell-2\alpha\ell-\alpha^2-1/4)}.
\end{split}
\end{equation*}
We define the polar part of $g_N$ as 
$$g_N^P=g_N-g_N^F.$$
Of course $g_N^P=0$ if $N\neq 1$. In order to compute $g_1^P$ we proceed as in the proof of Theorem 8.1 of \cite{DMZ}.
Let $\tau=\tau_1+i\tau_2$ for $\tau_1$, $\tau_2\in\mathbb R$ and define $A$ by $\text{Im}(u)=A\tau_2$ and choose a contour 
as described in Theorem 8.1 of \cite{DMZ}. Further fix a point $Q=A\tau +B$ for $A$, $B\in\mathbb R$ so that $g_1(e^{2\pi i Q},q)$ is finite. Then
\begin{equation*}
\begin{split}
g^P_1(y;q)&=g_1(y;q)-g^F_1(y;q)\\
&= \sum_{\ell\in\Z}(g_{1,\ell}^Q(q)-h_{1}(q))q^{\ell^2/2}y^\ell\\
&= 2\pi i \sum_{\ell\in \Z,\,\alpha\in\Z+\frac{1}{2}}\frac{\text{sgn}(\alpha-A)-\text{sgn}(\alpha+\ell)}{2}\,\text{Res}_{y=-q^\alpha}(g_1(y;q)y^{-\ell})y^\ell\\
&=  \sum_{\ell\in \Z,\,\alpha\in \Z+\frac{1}{2}}\frac{\text{sgn}(\alpha-A)-\text{sgn}(\alpha+\ell)}{2}\,y^\ell(-1)^{\ell+\alpha+1/2} q^{-\alpha^2/2-\alpha\ell}\\
%&=  \sum_{\alpha\in \Z+\frac{1}{2}}\mathcal R_{-\alpha}(-yq^{-\alpha})(-1)^{\alpha+1/2}q^{-\alpha^2/2}\\
&= \sum_{\alpha\in \Z+ \frac{1}{2}}\frac{(yq^{-\alpha})^{-\alpha+1/2}}{1+yq^{-\alpha}}q^{-\alpha^2/2}
=  \sum_{\alpha\in \Z+\frac{1}{2}}\frac{y^{\alpha+1/2}q^{\alpha(\alpha-1)/2}}{1+yq^{\alpha}}.
\end{split}_{\qed}
\end{equation*}

\medskip

We finally list explicit decompositions of the first few $V^G$-module characters into the first few
$\N4$ characters in Table~\ref{N4-decomp}.

\begin{table}\caption{Decomposition multiplicities of $V^G$-module characters into characters for the $\N4$ super Virasoro algebra}\label{N4-decomp}
\centerline{
$\begin{array}{c|c|rrrrrrrrrrrrrrr}
 & \text{atypical} & \multicolumn{12}{c}{\text{typical $\text{ch}_h(y;q)$ for $h=$ }}\\
& \text{ch}_{1/4,0}(y;q) & 0 & 1 & 2 & 3 & 4 & 5 & 6 & 7 & 8 & 9 & 10 & 11 \\\hline 
V_{\lambda_0} & -2\phantom{-} &  1 & 0 & 1 & 0 & 1 & 3 & 2 & 6 & 11 & 13 & 24 & 43 \\
V_{\lambda_1} & 1 &    0 & 0 & 0 & 2 & 2 & 2 & 8 & 10 & 16 & 30 & 46 & 68 \\
V_{\lambda_2} & 0 &    0 & 1 & 0 & 1 & 3 & 5 & 7 & 14 & 22 & 39 & 60 & 97 \\
V_{\lambda_3} & 0 &    0 & 0 & 2 & 0 & 2 & 6 & 8 & 14 & 28 & 38 & 70 & 112 \\
V_{\lambda_4} & 0 &    0 & 0 & 0 & 3 & 1 & 3 & 9 & 15 & 22 & 45 & 67 & 112 \\
V_{\lambda_5} & 0 &    0 & 0 & 0 & 0 & 4 & 2 & 6 & 12 & 22 & 36 & 66 & 102 \\
V_{\lambda_6} & 0 &    0 & 0 & 0 & 0 & 0 & 5 & 3 & 9 & 18 & 30 & 50 & 95 \\
V_{\lambda_7} & 0 &    0 & 0 & 0 & 0 & 0 & 0 & 6 & 4 & 12 & 24 & 42 & 66 \\
V_{\lambda_8} & 0 &    0 & 0 & 0 & 0 & 0 & 0 & 0 & 7 & 5 & 15 & 30 & 54 \\
V_{\lambda_9} & 0 &    0 & 0 & 0 & 0 & 0 & 0 & 0 & 0 & 8 & 6 & 18 & 36 \\
V_{\lambda_{10}} & 0 &    0 & 0 & 0 & 0 & 0 & 0 & 0 & 0 & 0 & 9 & 7 & 21
\end{array}  $ }
\end{table}

%%%%%%%%%%%%%%%%%%%%%%%%%%%%%%%%%%%%%%

\subsection{Implications for the elliptic genus of a K3 surface}

We can apply now the discussion of the previous subsections to the case of K3 surfaces.

\medskip
Let $V$ be the vertex algebra studied in Subsection~\ref{VG} and let $G={\rm SL}(2,\C)$.

\begin{cor}\label{cor:voa}
The elliptic genus of a K3 surface is the graded character of a natural
virtual vertex operator algebra module for $V^G$ and its $\N4$ super Virasoro vertex subalgebra.
\end{cor}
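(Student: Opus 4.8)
The plan is to assemble the general construction of Section~\ref{Holonomy} in the special case $d=2$, $G={\rm SL}(2,\C)$, $X$ a K3 surface. First I would recall that for a K3 surface the canonical bundle is trivial, so the structure group of the holomorphic frame bundle of $TX$ reduces from $GL(2,\C)$ to $SL(2,\C)$; let $P$ be the resulting holomorphic $SL(2,\C)$-principal bundle. With $V$ and $M$ the vertex operator superalgebra and (twisted) module from Subsection~\ref{VG}, and with $G={\rm SL}(2,\C)<{\rm Aut}(V)$ acting as described there, the hypotheses of Theorem~\ref{character} are satisfied: by the decompositions $V=\bigoplus_N \lambda_N\otimes V_{\lambda_N}$ and $M=\bigoplus_N \lambda_N\otimes M_{\lambda_N}$ recorded in Subsection~\ref{ellgenus} the required $G\times V^G$-module decomposition~\eqref{decomposition} holds with finite-dimensional $\lambda_N$, and the associated bundles $\underline{W_{\lambda_N}}$ (the symmetric powers of $TX$ and its dual, tensored together according to the weight-space structure of $V$) are holomorphic because $P$ is holomorphic. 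Theorem~\ref{character} then asserts precisely that $\chi(X,\underline{M})$, and hence $\chi_y(q,{\cal L}X)=(-y)^{-1}\chi(X,\underline{M})$, is the graded character of a natural virtual $V^G$-module, namely $\bigoplus_N \chi(X,\underline{W_{\lambda_N}})\otimes M_{\lambda_N}$.

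Next I would identify this Todd genus with the complex elliptic genus. By the definition recalled in Subsection~\ref{ellgenus}, $\chi_y(q,{\cal L}X)$ is the $\chi_y$-genus of the explicit infinite tensor product of exterior and symmetric powers of $TX$ and $T^*X$; comparing that tensor product term by term with the fibre $M$ of $\underline{M}$ shows they coincide after the normalisation factor $(-y)^{-d/2}=(-y)^{-1}$, so $(-y)^{-1}\chi(X,\underline{M})$ literally equals $\chi_y(q,{\cal L}X)$, which for a K3 surface is the Jacobi form $2\,\phi_{0,1}(z;\tau)$. This gives the first half of the corollary: the elliptic genus is the graded character of the virtual $V^G$-module just constructed.

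For the second half I would restrict the $V^G$-module structure to the $\N4$ super Virasoro vertex subalgebra, which by Lemma~\ref{lem:algebras}(3) sits inside $V^G$ at central charge $6$. The virtual module $\bigoplus_N \chi(X,\underline{W_{\lambda_N}})\otimes M_{\lambda_N}$ restricts to a virtual $\N4$-module, and its graded character is unchanged, hence still equal to the elliptic genus. To make this explicit one feeds in Theorem~\ref{holonomydecomp}: each $M_{\lambda_N}$-character decomposes into atypical and typical $\N4$ characters with the multiplicities $\mathrm{ch}_N^{R,a}$ and $\mathrm{mult}(N,h)$ computed there (equivalently, the table of NS-multiplicities spectrally flowed to the Ramond sector), and the multiplicity of $M_{\lambda_N}$ in the virtual module is the Euler characteristic $\chi(X,\underline{W_{\lambda_N}})=\mathrm{Td}(X)\,\mathrm{ch}(\underline{W_{\lambda_N}})[X]$, a concrete integer computable from $c_2(X)=24$ via Riemann--Roch. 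Summing gives the promised explicit decomposition of the elliptic genus into $\N4$ characters.

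The construction itself is essentially a specialisation, so I do not expect a serious obstacle; the one point needing genuine care is checking that the abstract hypotheses of Theorem~\ref{character} really hold for this $V$, $M$, $G$ --- i.e.\ that the $G$-action is by vertex-algebra automorphisms, that the isotypic decomposition~\eqref{decomposition} has finite-dimensional multiplicity spaces $\lambda_N$ in each $q$-degree, and that the bundles $\underline{W_{\lambda_N}}$ one obtains are genuinely holomorphic (which uses that on a K3 surface the reduction to $SL(2,\C)$ is by a \emph{holomorphic} cocycle). Once that bookkeeping is done, everything else is a matter of quoting Theorem~\ref{character}, Lemma~\ref{lem:algebras}, Theorem~\ref{holonomydecomp}, and Hirzebruch--Riemann--Roch.
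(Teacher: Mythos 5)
Your proposal is correct and follows essentially the same route as the paper: triviality of the canonical bundle gives the reduction to ${\rm SL}(2,\C)$, the decompositions of $V$ and $M$ from the earlier subsections verify the hypotheses of Theorem~\ref{character}, and Lemma~\ref{lem:algebras}(3) supplies the $\N4$ subalgebra; the paper's own proof is just a terser version of this. The extra care you flag about holomorphy of the bundles $\underline{W_{\lambda_N}}$ and the explicit $\N4$ decomposition via Theorem~\ref{holonomydecomp} is consistent with, and slightly more detailed than, what the paper records.
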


\noindent{\bf Proof:}
A K3 surface has a trivial canonical line bundle. This allows to reduce the structure group of
the holomorphic tangent bundle from ${\rm GL}(2,\C)$ to ${\rm SL}(2,\C)$. As discussed in
Subsection~\ref{VG}, the vertex algebra $V$ and the $V$-module $M$ can be decomposed into direct
sums of $V^G$-modules. Theorem~\ref{character} and Lemma~\ref{lem:algebras} part 3.\ together with
the description of the elliptic genus as twisted Todd genus give now the stated virtual vertex algebra
module structures.
\qed

\medskip

For the explicit decomposition of the elliptic genus into characters of $V^G$ and the $\N4$ 
super Virasoro vertex algebra one has to compute the holomorphic Euler characteristics
$\chi(X,\underline{\lambda_n})$ where $\lambda_n$ is the $(n+1)$-dimensional irreducible representation
of ${\rm SL}(2,\C)$. This will be done in the final section.

%%%%%%%%%%%%%%%%%%%%%%%%%%%%%%%%%%%%%%%%%%%%%%%%%%%%%%%%%%%%%%%%%%%%%%%%%%%%%%%%%%%%%%%%%%%%%%%%%%%%%%%%%%5

\section{The elliptic genus and Mathieu Moonshine}\label{Mathieumoonshine}

In Section~\ref{Mukaigroups}, we have seen that the elliptic genus
of a K3 surface can be given the structure of a virtual $M_{24}$-module
extending the virtual $H$-module structures for symplectic automorphism
groups $H$.

In Section~\ref{Holonomy}, it was shown that the Calabi-Yau structure of 
K3
 surfaces leads to a $V^{SU(2)}$-module structure for its elliptic
genus. This implies in particular that there is a decomposition of the elliptic genus
into characters of the $\N4$ super Virasoro vertex operator algebra
compatible with the above virtual $M_{24}$-module structures.

Somewhat disappointingly, we will show in this section that the resulting 
virtual $M_{24}\times V^G$-module structures cannot be made compatible with the predicted 
McKay-Thompson series of Mathieu moonshine.

%%%%%%%%%%%%%%%%%%%%%%%%%%%%%%%%%%%%%

\subsection{Rational Mathieu Moonshine}\label{rationalmoon}

The coefficients of the complex elliptic genus of a complex manifold
are the holomorphic Euler characteristics of certain vector bundles
associated to the tangent bundle of the manifold.
In the case of a K3 surface $X$, the structure group
of the principal bundle corresponding to the tangent bundle can be reduced
to ${\rm SU}(2)$. 
Every complex vector bundle associated to the tangent bundle
of $X$ is as a differentiable vector bundle isomorphic to a direct sum 
of vector bundles associated to irreducible representations of ${\rm SU}(2)$. 
The irreducible representations of ${\rm SU}(2)$ are
exactly the symmetric powers of the two-dimensional defining representation
of ${\rm SU}(2)$. 

We collect all the symmetric powers of the tangent bundle~$T$ of~$X$ 
in the  generating series $S_tT=\bigoplus_{n=0}^\infty S^nT\cdot t^n$ and
investigate the expression
$$ \chi(X,S_tT)=\sum_{n=0}^\infty \chi(X,S^nT)\cdot t^n\in \Z[[t]].$$

\medskip

Let $E$ be a holomorphic vector bundle on a complex manifold $X$. Recall
from Section~\ref{ellgenus} that by the Riemann-Roch theorem one has
$$\chi(X,E)={\rm Td}(X){\rm ch}(E)[X].$$
%where ${\rm Td}(X)$ is the total Todd class and
%${\rm ch}(E)$ is the Chern character of $X$. 
For the symmetric powers
of the tangent bundle we get
${\rm ch}(S_tT)=\prod_{i=1}^d(1-e^{x_i}t)^{-1}$
where the $x_i$ are the formal roots of the total Chern class of $T$ and $d$ is the complex dimension
of $X$. Thus $ \chi(X,S_tT)$ is the complex genus for the characteristic power series
$$Q(x)=\frac{x}{1-e^{-x}}\cdot \frac{1}{1-e^xt}=
\frac{1}{1-t}\,\left(1-\frac{1+t}{2(t-1)}\, x+\frac{1+10t+t^2}{12(t-1)^2}\,  x^2+\cdots \right)$$
in the sense of~\cite{Hirzebruch-Habil}.

For $X$ a K3 surface, the only non-vanishing Chern number is $c_2[X]=24$ which equals the Euler characteristic
of $X$. Using the methods developed in~\cite{Hirzebruch-Habil}, we easily obtain
\begin{eqnarray}
r_{1A}:=\chi(X,S_tT) & =   & \frac{2 - 28 t+2\,t^2}{(t-1)^4} \\ \nonumber
                          & =  & 2 - 20\,t - 90\,t^2 - 232\,t^3 - 470\,t^4 - 828\,t^5 - 1330\,t^6  + \cdots.
\end{eqnarray}

\medskip
Given a non-trivial symplectic automorphism $g$ of $X$, we can compute the equivariant genus 
$r_{g}(t)=\chi(g;X,S_tT)$ by using the information from Table~\ref{finitesymp}
and applying the equivariant Riemann-Roch~\cite{AtiyahBott} formula:
$$\chi(g;X,S_tT)=\sum_{p_i} \frac{1}{(1- \lambda_i)(1- \lambda_i^{-1})}\cdot \frac{1}{(1-\lambda_i^{-1}\,t)(1-\lambda_i\,t)}.$$
Here, $(\lambda_i,\lambda_i^{-1})$ are the eigenvalues of $g$ acting on the tangent
space of the isolated fixed point $p_i$.

We recall from Section~\ref{Mukaigroups} that a finite symplectic automorphism $g$ of
a K3 surface can be considered as an element of $M_{23}$ or $M_{24}$ via Mukai's theorem.
Up to taking inverses, the associated conjugacy class in $M_{23}$ or $M_{24}$ depends only on the order 
of~$g$. For the so arising non-trivial conjugacy classes of $M_{23}$ 
one obtains the following seven rational functions: 
\begin{eqnarray*}
r_{2A} & = & \frac{2}{\Phi_2(t)^2} \\
r_{3A} & = & \frac{2}{\Phi_3(t)} \\
r_{4A} & = & \frac{2}{\Phi_4(t)} \\
r_{5A} & = & \frac{2+2t+2t^2}{\Phi_5(t)} \\
r_{6A} & = & \frac{2}{\Phi_6(t)} \\
r_{7AB} & = & \frac{2(1+t^4)+3(t+t^3)+4t^2}{\Phi_7(t)} \phantom{111111111111111111111} \\
r_{8A} & = & \frac{2+2t+2t^2}{\Phi_8(t)}.
\end{eqnarray*}
Here, $\Phi_n(t)$ denotes the $n$-th cyclotomic polynomial of degree $\varphi(n)$.

\smallskip 

%Thus if the  Euler characteristic $\chi(X,S^nT)$ 
%is given a (virtual) $M_{23}$-module or $M_{24}$-module structure for all non-negative integers $n$,
%also the complex elliptic genus will inherit this property. 

We like to make $\chi(X,S_tT)$ into a graded virtual rational $M_{23}$- or $M_{24}$-module with
characters $r_{1A}$, $r_{2A}$, $\dots$,  $r_{7AB}$, $r_{8A}$ as above.
By Theorem~\ref{virtualM24}, we know that this is always possible.
For a rational $M_{23}$-module structure, we have to provide
formulas for $r_{g}={\rm tr} (g| \chi(X,S_tT))$ for $g$ belonging to a conjugacy class of $M_{23}$ of the four remaining types
$11AB$, $14AB$, $15AB$ and $23AB$.
We note that for the other eight types of conjugacy classes $[g]$ the rational
function $r_{g}(t)$ has the form $\frac{P(t)}{Q(t)}$ where $Q(t)$ is a power of the $n$-th cyclotomic 
polynomial with $n$ the order of $g$ and $P(t)$ is a palindromic polynomial of degree two less
than the degree of $Q(t)$.
By requiring that the resulting rational functions have the same
structure as for the previous eight classes, we found  the following (non-uniquely determined) expressions:
\begin{eqnarray*}
r_{11AB} & = & \frac{2(1+t^8)+4(t+t^7)+2(t^2+t^6)+(t^3+t^5)+4t^4}{\Phi_{11}(t)} \\
r_{14AB} & = & \frac{2(1+t^4)+(t+t^3)-2t^2}{\Phi_{14}(t)} \\
r_{15AB} & = & \frac{2(1+t^6)+(t+t^5)-3(t^2+t^4)-6t^3}{\Phi_{15}(t)} \\
r_{23AB} & = & \frac{2(1+t^{20})+5(t+t^{19})+7(t^2+t^{18})+5(t^3+t^{17})+5(t^4+t^{16})}{\Phi_{23}(t)}\\
 & &\qquad+\ \frac{-5(t^5+t^{15})-(t^6+t^{14})+13(t^8+t^{12})+6(t^9+t^{11})+15t^{10}}{\Phi_{23}(t)}. 
\end{eqnarray*}
The general formula 
$$r_{NA}=m(N)\sum_{n\in (\Z/N\Z)^*} \frac{1}{(1- e^{2\pi i n/N})(1- e^{-2\pi i n/N})}\cdot \frac{1}{(1-e^{-2\pi i n/N}\,t)(1-e^{2\pi i n/N}\,t)}$$
for $N\leq 8$ where the constant $m(N)$ is chosen such that $r_{NA}=2+O(t)$ cannot be used for $N>8$ although it has the correct shape
since the $t$-expansion contains non-integral coefficients.

The character table of $M_{23}$ allows to compute the corresponding multiplicities $m_\chi(t)$ of the irreducible $M_{23}$-modules
in $\chi(X,S_tT)$. The resulting rational functions in $t$ have integral coefficients, i.e.~$\chi(X,S_tT)$
becomes by taking the above characters $r_{11AB}$, $r_{14AB}$, $r_{15AB}$ and $r_{23AB}$ indeed a virtual
$M_{23}$-module. For example, for the multiplicity of the trivial character one has
$m_{\chi_1}(t)=\frac{P(t)}{Q(t)}$ with 
$P(t)$ a polynomial of degree~$70$ and $Q(t)$ a product of cyclotomic polynomials
of total degree~$72$. 
%P(t)=2 t^{70}+2 t^{69}+2 t^{68}+2 t^{67}+2 t^{66}-t^{63}-2 t^{62}-2
%    t^{61}+t^{60}-3 t^{59}-t^{57}-t^{56}-3 t^{55}-3 t^{54}-t^{53}-4
%    t^{52}-t^{50}-t^{49}-2 t^{47}-3 t^{46}-5 t^{45}-2 t^{44}-2 t^{43}-t^{42}+3
%    t^{41}+3 t^{40}+6 t^{39}+4 t^{38}+5 t^{37}+3 t^{36}+5 t^{35}+3 t^{34}+5
%    t^{33}+4 t^{32}+6 t^{31}+3 t^{30}+3 t^{29}-t^{28}-2 t^{27}-2 t^{26}-5
%    t^{25}-3 t^{24}-2 t^{23}-t^{21}-t^{20}-4 t^{18}-t^{17}-3 t^{16}-3
%    t^{15}-t^{14}-t^{13}-3 t^{11}+t^{10}-2 t^9-2 t^8-t^7+2 t^4+2 t^3+2 t^2+2
%    t+2$ 
%and $Q(t)=(t-1)^4 (t+1)^2 \left(t^2+1\right) \left(t^2-t+1\right)
%    \left(t^2+t+1\right) \left(t^4+1\right) \left(t^4+t^3+t^2+t+1\right)
%    \left(t^6-t^5+t^4-t^3+t^2-t+1\right) \left(t^6+t^5+t^4+t^3+t^2+t+1\right)
%    \left(t^8-t^7+t^5-t^4+t^3-t+1\right)
%    \left(t^{10}+t^9+t^8+t^7+t^6+t^5+t^4+t^3+t^2+t+1\right)
%    \left(t^{22}+t^{21}+t^{20}+t^{19}+t^{18}+t^{17}+t^{16}+t^{15}+t^{14}+t^{13 }+t^{12}+t^{11}+t^{10}+t^9+t^8+t^7+t^6+t^5+t^4+t^3+t^2+t+1\right)$.

The partial fraction decompositions of the multiplicity functions $m_\chi(t)$ contain for all
$M_{23}$-characters $\chi$ a dominating term $\frac{c_\chi}{(t-1)^4}$  for
$t\rightarrow 1$ with $c_\chi<0$; for example, $m_{\chi_1}=-\frac{1}{425040}$.
Thus for large enough $n$, the virtual $M_{23}$-modules $-\chi(X,S^nT)$ are 
actual all honest $M_{23}$-modules. We have not been able to make the $-\chi(X,S^nT)$
simultaneously for all $n$ into honest $M_{23}$-modules under the assumptions made.
More precisely, by taking the characters for elements of order $1$ to $8$ to zero but by
allowing arbitrary values for the elements of order $11$, $14$, $15$ and $23$, one is able
to add integral linear combinations of four virtual rational $M_{23}$-modules $\alpha_1$, $\dots$, $\alpha_4$. 
The following table describes the decomposition into irreducibles:
{ \setlength{\arraycolsep}{3.2pt} 
$$  \begin{array}{c|rrrrrrrrrrrrrrrrr}
{\rm irred.} & \chi_1 & \chi_2 & \chi_3 & \chi_4 & \chi_5 & \chi_6 & \chi_7 & \chi_8 & \chi_9 & \chi_{10} & \chi_{11} & \chi_{12} & \chi_{13} & \chi_{14} & \chi_{15} & \chi_{16} & \chi_{17} \\ \hline
 \alpha_1 &     2 & 0 & 2 & 2 & -2 & 0 & 0 & 0 & 0 & 0 & 0 & -1 & -1 & 0 & 0 & 2 & 0 \\
 \alpha_2 &     2 & -2 & 1 & 1 & 2 & 0 & 0 & 0 & -2 & 0 & 0 & 0 & 0 & -1 & -1 & -2 & 2 \\
 \alpha_3 &     2 & -2 & 0 & 0 & 0 & 2 & -1 & -1 & 2 & 0 & 0 & 2 & 2 & 0 & 0 & 0 & -2 \\
 \alpha_4 &     2 & -2 & -2 & -2 & 0 & 2 & 2 & 2 & 0 & -1 & -1 & -2 & -2 & 2 & 2 & 0 & 0
   \end{array}$$}

We list the decomposition of $-\chi(X,S_tT)$ into irreducible virtual $M_{23}$-modules 
for the first $21$ terms in Table~\ref{rationaldecomposition}.
\begin{table}\caption{Decomposition of $-\chi(X,S_tT)$ into irreducible $M_{23}$-modules}\label{rationaldecomposition}
{ \setlength{\arraycolsep}{3.2pt} 
$$  \begin{array}{r|rrrrrrrrrrrrrrrrr}
n & \chi_1 & \chi_2 & \chi_3 & \chi_4 & \chi_5 & \chi_6 & \chi_7 & \chi_8 & \chi_9 & \chi_{10} & \chi_{11} & \chi_{12} & \chi_{13} & \chi_{14} & \chi_{15} & \chi_{16} & \chi_{17} \\ \hline
 0 &   -2 & 0 & 0 & 0 & 0 & 0 & 0 & 0 & 0 & 0 & 0 & 0 & 0 & 0 & 0 & 0 & 0 \\
 1 &     -2 & 1 & 0 & 0 & 0 & 0 & 0 & 0 & 0 & 0 & 0 & 0 & 0 & 0 & 0 & 0 & 0 \\
 2 &     0 & 0 & 1 & 1 & 0 & 0 & 0 & 0 & 0 & 0 & 0 & 0 & 0 & 0 & 0 & 0 & 0 \\
 3 &     2 & -1 & 0 & 0 & 0 & 1 & 0 & 0 & 1 & 0 & 0 & 1 & 1 & 0 & 0 & 0 & -1 \\
 4 &     2 & -2 & -2 & -2 & 0 & 2 & 0 & 0 & 2 & 0 & 0 & 1 & 1 & 1 & 1 & 0 & -2 \\
 5 &     2 & -1 & 0 & 0 & -1 & 1 & -1 & -1 & 2 & 0 & 0 & 1 & 1 & 0 & 0 & 1 & -1 \\
 6 &     0 & 0 & 0 & 0 & 0 & 0 & -1 & -1 & 0 & 0 & 0 & 1 & 1 & 0 & 0 & 0 & 0 \\
 7 &     -1 & 0 & 0 & 0 & 1 & 0 & 0 & 0 & -1 & 0 & 0 & 0 & 0 & 0 & 0 & 0 & 1 \\
 8 &     -2 & 2 & 2 & 2 & 0 & -2 & 0 & 0 & -2 & 1 & 1 & 0 & 0 & -1 & -1 & 0 & 2 \\
 9 &     -1 & 1 & 0 & 0 & 0 & -1 & 2 & 2 & -1 & 0 & 0 & -2 & -2 & 1 & 1 & 1 & 2 \\
 10 &     -2 & 2 & 1 & 1 & 0 & -2 & 0 & 0 & 0 & 1 & 1 & 0 & 0 & 0 & 0 & 0 & 2 \\
 11 &     1 & -1 & -1 & -1 & 1 & 1 & 1 & 1 & 0 & 0 & 0 & 0 & 0 & 1 & 1 & 0 & 2 \\
 12 &     0 & 0 & 0 & 0 & 0 & 0 & -1 & -1 & 0 & 1 & 1 & 2 & 2 & 0 & 0 & 0 & 2 \\
 13 &    2 & -1 & -1 & -1 & 0 & 2 & 1 & 1 & 0 & 0 & 0 & 0 & 0 & 2 & 2 & 2 & 2 \\
 14 &     0 & 0 & 0 & 0 & 0 & 0 & 0 & 0 & 0 & 1 & 1 & 1 & 1 & 2 & 2 & 2 & 2 \\
 15 &     -1 & 0 & -1 & -1 & 1 & 0 & 1 & 1 & 1 & 1 & 1 & 1 & 1 & 2 & 2 & 2 & 3 \\
 16 &     0 & 0 & 1 & 1 & 0 & 0 & 1 & 1 & 0 & 2 & 2 & 1 & 1 & 2 & 2 & 2 & 4 \\
 17 &     -1 & 1 & 1 & 1 & 1 & 0 & 0 & 0 & 1 & 2 & 2 & 3 & 3 & 1 & 1 & 2 & 5 \\
 18 &     0 & 0 & 0 & 0 & 0 & 2 & 0 & 0 & 2 & 2 & 2 & 4 & 4 & 3 & 3 & 2 & 4 \\
 19 &     2 & -1 & 0 & 0 & 0 & 3 & 0 & 0 & 3 & 2 & 2 & 4 & 4 & 3 & 3 & 4 & 5 \\
 20 &     2 & -2 & 0 & 0 & 0 & 2 & 0 & 0 & 2 & 3 & 3 & 4 & 4 & 4 & 4 & 4 & 6
\end{array}
$$}
\end{table}

\medskip

We expect that our rational $M_{23}$ moonshine extends to a rational moonshine for $M_{24}$ or even $Co_0$.
However, because of the non-uniqueness in choosing additional characters and the incompatibility with
Mathieu moonshine discussed in the next subsection, we do not investigate this further.
%%%%%%%%%%%%%%%%%%%%%%%%%%%%%%

\subsection{Comparing rational and Mathieu Moonshine}\label{comparing}

We will compare the virtual $M_{23}$-modules structures for the holomorphic Euler 
characteristics $\chi(X,S^nT)$ as in the previous  subsection with Mathieu moonshine.

\medskip

First, we have to verify that for symplectic automorphisms on a K3 surface the equivariant elliptic genus coincides
with the prediction of Mathieu moonshine as in~\cite{GHV2,EH}.

Recall the definition of the $\chi_y$-genus for a $d$-dimensional complex manifold $X$ and
a holomorphic vector bundle $E$
\[
\chi_y(X, E) = \sum_{p=0}^d \chi(X, \Lambda^pT^*\otimes E)\,y^p.
\]
For a compact Lie group $H$ acting on a complex manifold $X$ by automorphisms,
the equivariant elliptic genus 
$$\chi_y(g;q,{\cal L}X):= (-y)^{-d/2}\chi_y\bigl(g;X,\bigotimes_{n=1}^{\infty}\Lambda_{yq^n}T^*
\otimes \bigotimes_{n=1}^{\infty}\Lambda_{y^{-1}q^n}T
\otimes \bigotimes_{n=1}^{\infty}S_{q^n}(T^*\oplus T)\bigr) $$
is an element in $R(H)[y,y^{-1}][[q]]$ and can also be interpreted for fixed $g\in H$ of finite
order as a Jacobi form for a certain congruence subgroup. 
The holomorphic Lefschetz fixed point formula~\cite{AtiyahBott} allows to determine 
the equivariant elliptic genus from information on the fixed point set and the
action on its normal bundle.

\smallskip

For a K3 surface, we can use Table~\ref{finitesymp} to compute the equivariant elliptic genus 
for finite symplectic automorphisms similar as done for $\chi(g;X,S_tT)$ in the previous subsection.

Let $q=e^{2\pi i \tau}$, $y=e^{2\pi i u}$ for $\tau$ in the upper half of the complex plane, and $u\in\C$, and consider the following 
quotient of a standard Jacobi theta function and Dedekind's eta function
\begin{equation*}
\varphi(u;\,\tau):=\vartheta_1(u;\,\tau)\eta(\tau)^{-3} = -i(y^{1/2}-y^{-1/2})\prod_{n=1}^\infty  (1-yq^n)(1-y^{-1}q^{n})(1-q^n)^{-2}.
\end{equation*}
It has the following modular and elliptic transformation properties
\begin{equation*}
 \begin{split}
  \varphi\Bigl(\frac{u}{ct+d};\,\frac{a\tau+b}{c\tau+d}\Bigr)&\,=\,(c\tau+d)^{-1}e^{\frac{\pi i cu^2}{c\tau+d}}\varphi(u;\,\tau),\qquad 
\left({a \ b \atop c \ d}\right)\in {\rm SL}(2,\Z)\\
\varphi(u+\lambda\tau+\mu;\,\tau)&\,=\,e^{-\pi i (\lambda^2\tau+2\lambda u)}(-1)^{\lambda+\mu}\varphi(u;\,\tau),\qquad \lambda,\, \mu\in\Z. 
 \end{split}
\end{equation*}
\begin{defi}
A weak Jacobi form $J$ of weight $k$ and index $m$ for some subgroup of 
$\Gamma\subset {\rm SL}(2,\Z)$ is a holomorphic function on $\H\times\C$ with no poles at the cusp $q=0$, satisfying 
\begin{equation*}
 \begin{split}
  J\Bigl(\frac{u}{ct+d};\,\frac{a\tau+b}{c\tau+d}\Bigr)&=(c\tau+d)^ke^{\frac{2\pi i mcu^2}{c\tau+d}}J(u;\,\tau),\qquad 
\left({a \ b \atop c \ d}\right)\in \Gamma\subset {\rm SL}(2,\Z)\\
J(u+\lambda\tau+\mu;\,\tau)&=e^{-2\pi i m(\lambda^2\tau+2\lambda u)}J(u;\,\tau),\qquad \lambda,\, \mu\in\Z. 
 \end{split}
\end{equation*}
\end{defi}
\begin{lem}\label{lem:jacobi}
For a non-trivial finite symplectic automorphism $g$ of a K3 surface $X$ the equivariant elliptic genus
$\chi_{-y}(g;q,{\cal L}X)$ is a weak Jacobi form for $\Gamma_0({\rm ord}(g))$ of weight zero and index one.  
\end{lem}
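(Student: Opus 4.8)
The plan is to compute $\chi_{-y}(g;q,{\cal L}X)$ explicitly via the holomorphic Lefschetz fixed point formula and recognize the answer as a sum of $\varphi$-quotients whose transformation behavior is already recorded above. First I would apply the Atiyah--Bott--Singer formula to the bundle defining the elliptic genus: since by Nikulin's result a non-trivial finite symplectic automorphism $g$ has only isolated fixed points $p_i$ with tangent eigenvalues $(\lambda_i,\lambda_i^{-1})$ as listed in Table~\ref{finitesymp}, each fixed point contributes
\[
(-y)^{-1}\,\prod_{n=1}^\infty
\frac{(1-y\lambda_i q^n)(1-y\lambda_i^{-1}q^n)(1-y^{-1}\lambda_i^{-1}q^n)(1-y^{-1}\lambda_i q^n)}
{(1-\lambda_i q^n)(1-\lambda_i^{-1}q^n)(1-\lambda_i^{-1}q^n)(1-\lambda_i q^n)}\cdot
\frac{1}{(1-\lambda_i)(1-\lambda_i^{-1})},
\]
where the numerator comes from the $\Lambda_{yq^n}T^*$, $\Lambda_{y^{-1}q^n}T$ factors (at $q^0$ only the $\Lambda_y T^*$ piece needs care, producing the $(1-\lambda_i)(1-\lambda_i^{-1})$ shift that cancels the denominator of the $\chi_y$-genus prefactor) and the Symmetric powers of $T^*\oplus T$ give the extra $q^n$-factors. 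The upshot is that each fixed point contributes a ratio of theta functions; concretely, writing $\lambda_i = e^{2\pi i a_i}$ one gets a term proportional to $\varphi(u+a_i\tau';\tau)\varphi(u-a_i\tau';\tau)/\bigl(\varphi(a_i\tau';\tau)\varphi(-a_i\tau';\tau)\bigr)$ for suitable shifts, after matching the infinite products to $\varphi(u;\tau)=\vartheta_1(u;\tau)\eta(\tau)^{-3}$ defined above. Summing over the $p_i$ gives a closed formula for $\chi_{-y}(g;q,{\cal L}X)$.

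Next I would verify the two defining properties of a weak Jacobi form of weight $0$ and index $1$. The elliptic transformation in $u\mapsto u+\lambda\tau+\mu$ is immediate from the elliptic transformation of $\varphi$ recorded above: each factor $\varphi(u\pm a_i\tau;\tau)$ picks up $e^{-\pi i(\lambda^2\tau+2\lambda u)}$ times a sign and a phase $e^{\mp 2\pi i\lambda a_i\tau}$; the two phases from the $\pm$ pair cancel, the signs cancel in the pair, and since weight and index in numerator and denominator are balanced the denominator factors $\varphi(\pm a_i\tau;\tau)$ are unaffected, leaving exactly the index-one factor $e^{-2\pi i(\lambda^2\tau+2\lambda u)}$. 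For the modular transformation one restricts to $\Gamma_0(N)$ with $N=\mathrm{ord}(g)$: under $\left({a\ b\atop c\ d}\right)\in\Gamma_0(N)$ one has $N\mid c$, so the shift parameters $a_i$, which are of the form (integer)$/N$ by Table~\ref{finitesymp}, satisfy $c a_i\in\Z$; then the theta-multiplier formula $\varphi\bigl(\tfrac{u}{c\tau+d};\tfrac{a\tau+b}{c\tau+d}\bigr)=(c\tau+d)^{-1}e^{\pi i cu^2/(c\tau+d)}\varphi(u;\tau)$ applies to each factor after absorbing the integer shift $c a_i$ into the lattice using the elliptic transformation, and the automorphy factors combine: three $\varphi$'s worth of weight $-1$ in the numerator (the two shifted ones plus, implicitly, the normalization) against one in the denominator per fixed point sum up so that the total weight is $0$ and the total index is $1$ (the $e^{\pi i cu^2/(c\tau+d)}$ exponents add up to the index-$1$ factor $e^{2\pi i cu^2/(c\tau+d)}$). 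Holomorphy on $\H\times\C$ and the absence of a pole at $q=0$ follow because the potential poles of $\varphi(u\pm a_i\tau;\tau)^{-1}$ at $u=\mp a_i\tau$ and the $\lambda_i\neq 1$ denominators $(1-\lambda_i)^{-1}$ are harmless for fixed generic $u$, and summing over fixed points the genus is manifestly a power series in $q$ with Laurent coefficients in $y$ (indeed it equals the equivariant genus, which lies in $R(\langle g\rangle)[y,y^{-1}][[q]]$), so there is no negative power of $q$.

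I would then pin down the exact level: \emph{a priori} the fixed-point formula only forces modularity under the subgroup fixing all the shift data $a_i$ modulo $1$ and their pairing, which is a congruence group commensurable with $\Gamma_0(N)$; checking $N=\mathrm{ord}(g)$ exactly amounts to confirming, case by case using Table~\ref{finitesymp}, that the set of shifts $\{a_i\}$ generates $\tfrac1N\Z/\Z$ and that no finer congruence condition sneaks in from the signs $(-1)^{\lambda+\mu}$ in the elliptic law (which would at worst introduce a character, not change the group). The main obstacle is exactly this bookkeeping step: assembling the individual theta multipliers into a single clean automorphy factor of weight $0$, index $1$, and trivial multiplier system for $\Gamma_0(N)$ requires care with the quadratic exponents $e^{\pi i c(u\pm a_i\tau)^2/(c\tau+d)}$ and the eta-multiplier contributions, and one must check there is no residual root-of-unity multiplier — this is where the specific eigenvalue data of Table~\ref{finitesymp} (in particular that the eigenvalues come in the balanced pairs $(\zeta,\zeta^{-1})$ forced by the symplectic condition) is essential to make the anomalous phases cancel. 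Everything else is a direct, if tedious, verification against the transformation formulas for $\varphi$ already displayed.
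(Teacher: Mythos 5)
Your overall strategy---evaluate the equivariant genus by the holomorphic Lefschetz fixed point formula, recognize each fixed-point contribution as a quotient of $\varphi$'s, and then invoke the modular and elliptic transformation laws of $\varphi$---is exactly the paper's. But there are two genuine problems. First, the shifts are wrong: writing $\lambda_i=e^{2\pi i n/N}$, the factors $(1+y\lambda_i q^m)=(1+e^{2\pi i(u+n/N+m\tau)})$ produce a translate of $u$ by the \emph{real} number $n/N$, so the fixed-point contribution is
$\varphi\bigl(u+\tfrac{n}{N};\tau\bigr)\varphi\bigl(u-\tfrac{n}{N};\tau\bigr)/\bigl(\varphi\bigl(\tfrac{n}{N};\tau\bigr)\varphi\bigl(-\tfrac{n}{N};\tau\bigr)\bigr)$,
not a quotient with shifts $a_i\tau$ along the $\tau$-direction as you write. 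This is not a cosmetic slip: with your $\tau$-direction shifts, the argument of $\varphi$ after $\tau\mapsto\frac{a\tau+b}{c\tau+d}$ becomes $\frac{u+a_i(a\tau+b)}{c\tau+d}$, and absorbing the shift into the lattice requires $b\,a_i\in\Z$, i.e.\ $b\equiv 0 \bmod N$ --- you would be proving modularity for $\Gamma^0(N)$ rather than the claimed $\Gamma_0(N)$. With the correct real shift one gets $\frac{u+(c\tau+d)n/N}{c\tau+d}$, and $c\equiv 0\bmod N$ is precisely what makes $\frac{cn}{N}\tau$ a lattice vector.

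Second, you leave the determination of the exact level as unfinished ``bookkeeping,'' whereas it is the heart of the proof and has a clean mechanism you do not identify: after the elliptic reduction the shift $n/N$ becomes $dn/N$, where $d$ is a unit mod $N$ (since $ad-bc=1$ and $N\mid c$), and Table~\ref{finitesymp} shows that for each order $N$ the eigenvalue pairs at the fixed points run, with a uniform multiplicity $m(N)$, over \emph{all} of $(\Z/N\Z)^*$; hence $n\mapsto dn$ merely permutes the summands and invariance under $\Gamma_0(N)$ follows at once, the quadratic exponents from numerator and denominator cancelling down to the single index-one factor $e^{2\pi i cu^2/(c\tau+d)}$. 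There is no residual multiplier to chase and no case-by-case generation argument needed. A minor further point: your displayed fixed-point formula drops the $q^0$ numerator $(1+y\lambda_i)(1+y\lambda_i^{-1})$ and asserts it cancels against the Lefschetz denominator $(1-\lambda_i)(1-\lambda_i^{-1})$; it does not --- both factors survive and are needed to match the $\varphi$-quotient.
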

\noindent{\bf Proof:}
Let $(\lambda_i,\lambda_i^{-1})$ be the eigenvalues of $g$ acting on the tangent
space of a fixed point $p_i$. The fixed point formula gives
\begin{equation}\label{eq:fixed}
\begin{split}
\chi_y(g;q,{\cal L}X) \,= \,
&\sum_{p_i} (-y)^{-1}\frac{(1+y\lambda_i)(1+y\lambda_i^{-1})}{(1-\lambda_i)(1-\lambda_i^{-1})} \times \\
&\times\prod_{n=1}^{\infty} \frac{(1+yq^n\lambda_i)(1+y^{-1}q^n\lambda_i^{-1})(1+yq^n\lambda_i^{-1})(1+y^{-1}q^n\lambda_i)}
{\bigl((1-q^n\lambda_i)(1-q^n\lambda_i^{-1})\bigr)^2}.
\end{split}
\end{equation}
% Here the sum is over the fixed points of the automorphism $g$.
Let $N=\text{ord}(g)$ and define multiplicities $m(N)$ by $m(2)=8$, $m(3)=3$, $m(4)=2$, $m(5)=m(6)=1$ and $m(7)=m(8)=1/2$. 
Then comparing with Table~\ref{finitesymp}, we see that
\begin{equation*}
 \chi_{-y}(g;q,{\cal L}X)=m(N)\sum_{n\in (\Z/N\Z)^*}
\frac{\varphi\bigl(u+\frac{n}{N};\,\tau\bigr)\varphi\bigl(u-\frac{n}{N};\,\tau\bigr)}
{\varphi\bigl(\frac{n}{N};\,\tau\bigr)\varphi\bigl(-\frac{n}{N};\,\tau\bigr)}.
\end{equation*}
Let $\left({a \ b \atop c \ d}\right)$ be in $\Gamma_0(N)$, that means $c\equiv 0 \bmod{N}$,
and it follows that $d\pmod{N}$ is a unit in $\Z/N\Z$. 
Define $u_\pm=u\pm (c\tau +d)n/N$ and $v_\pm=\pm (c\tau +d)n/N$. Then 
\begin{eqnarray*}
&& \frac{\varphi\bigl(\frac{u}{c\tau +d}+\frac{n}{N};\,\frac{a\tau+b}{c\tau+d}\bigr)\varphi\bigl(\frac{u}{c\tau +d}-\frac{n}{N};\,\frac{a\tau+b}{c\tau+d}\bigr)}
{\varphi\bigl(\frac{n}{N};\,\frac{a\tau+b}{c\tau+d}\bigr)\varphi\bigl(-\frac{n}{N};\,\frac{a\tau+b}{c\tau+d}\bigr)}
\ = \ \frac{\varphi\bigl(\frac{u_+}{c\tau +d};\,\frac{a\tau+b}{c\tau+d}\bigr)\varphi\bigl(\frac{u_-}{c\tau +d};\,\frac{a\tau+b}{c\tau+d}\bigr)}
{\varphi\bigl(\frac{v_+}{c\tau +d};\,\frac{a\tau+b}{c\tau+d}\bigr)\varphi\bigl(\frac{v_-}{c\tau +d};\,\frac{a\tau+b}{c\tau+d}\bigr)}\\
&& \ = \ e^{\frac{2\pi i u^2}{c\tau+d}}\,\frac{\varphi(u_+;\,\tau)\varphi(u_-;\,\tau)}
{\varphi(v_+;\,\tau)\varphi\bigl(v_-;\,\tau)}
\ =\ e^{\frac{2\pi i cu^2}{c\tau+d}}\,\frac{\varphi\bigl(u+\frac{dn}{N};\,\tau\bigr)
\varphi\bigl(u-\frac{dn}{N};\,\tau\bigr)}{\varphi\bigl(\frac{dn}{N};\,\tau\bigr)\varphi\bigl(-\frac{dn}{N};\,\tau\bigr)}
\end{eqnarray*}
where the second equality follows from the modular properties of $\varphi$, and the third one from the elliptic ones. Note that contributions 
from numerator and denominator lead to various cancellations. 
Inserting this identity in the expression for the equivariant elliptic genus proves the modular transformation property.
Its elliptic transformation property follows immediately from the elliptic properties of $\varphi$.  \qed
%$$\frac{\varphi\bigl(u+\frac{n}{N}+\lambda\tau+\mu;\,\tau\bigr)\varphi\bigl(u-\frac{n}{N}+\lambda\tau+\mu;\,\tau\bigr)}
%       {\varphi\bigl(\frac{n}{N};\,\tau\bigr)\varphi\bigl(-\frac{n}{N};\,\tau\bigr)}
%= 
%e^{-2\pi i (\lambda^2\tau+2\lambda u)}\frac{\varphi\bigl(u+\frac{n}{N};\,\tau\bigr)\varphi\bigl(u-\frac{n}{N};\,\tau\bigr)}
%       {\varphi\bigl(\frac{n}{N};\,\tau\bigr)\varphi\bigl(-\frac{n}{N};\,\tau\bigr)}
%$$
%for $\lambda$, $\tau\in\Z$ by using the the elliptic properties of $\varphi$. \qed

Now we can easily show:
\begin{thm}\label{moonshinegeometric}
For a non-trivial finite symplectic automorphism $g$ acting on a K3 surface $X$
the equivariant elliptic genus and the twining character determined by the McKay-Thompson series of 
Mathieu moonshine agree, i.e.~one has
$$\chi_{-y}(g;q,{\cal L}X)=\frac{e(g)}{12}\,\phi_{0,1}+f_g\,\phi_{-2,1},$$
where $e(g)$ is the character of the $24$-dimensional permutation representation of $M_{24}$ and $f_g$ is the modular form as in equation~(\ref{thompsonseries}).
\end{thm}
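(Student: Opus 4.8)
The plan is to exploit that the space of weak Jacobi forms of weight zero and index one for $\Gamma_0(N)$ is finite dimensional. By Lemma~\ref{lem:jacobi}, the left-hand side $\Phi_g:=\chi_{-y}(g;q,{\cal L}X)$ is such a form with $N=\mathrm{ord}(g)$. The right-hand side is of the same type: $\phi_{0,1}$ and $\phi_{-2,1}$ are the two standard generators (Eichler--Zagier) of the ring of weak Jacobi forms over $M_\ast(\mathrm{SL}(2,\Z))$, of weights $0$ and $-2$ and index one, hence a fortiori weak Jacobi forms for $\Gamma_0(N)$; $e(g)$ is a constant; and for each of the eight symplectic conjugacy classes the level $N_g$ of the weight-two form $f_g$ in~\eqref{thompsonseries} equals $\mathrm{ord}(g)=N$ (see the tables in~\cite{GHV2,EH}; the anomalous classes with $N_g\ne\mathrm{ord}(g)$ do not arise from symplectic automorphisms, cf.\ Remark~\ref{mukaigroups}.2), so $f_g\,\phi_{-2,1}$ lies in $M_2(\Gamma_0(N))\,\phi_{-2,1}$. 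I will use the structure statement that this space decomposes as $\C\,\phi_{0,1}\oplus M_2(\Gamma_0(N))\,\phi_{-2,1}$, with the second summand being precisely the subspace of forms vanishing at $u=0$ (this follows from the theta decomposition into vector-valued modular forms together with the classical statement for $\mathrm{SL}(2,\Z)$).

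First I would pin down the $\phi_{0,1}$-component. One has $\phi_{-2,1}(0;\tau)=-\vartheta_1(0;\tau)^2/\eta(\tau)^6=0$ with a double zero at $u=0$, while $\phi_{0,1}(0;\tau)=12$; hence evaluating the claimed identity at $u=0$ gives $\Phi_g(0;\tau)=12\,c_g$, where $c_g$ is the constant coefficient of $\phi_{0,1}$ in the decomposition of $\Phi_g$. On the other hand, substituting $u=0$ into the explicit expression
$$\chi_{-y}(g;q,{\cal L}X)=m(N)\sum_{n\in(\Z/N\Z)^\ast}\frac{\varphi(u+\tfrac{n}{N};\tau)\varphi(u-\tfrac{n}{N};\tau)}{\big(\varphi(\tfrac{n}{N};\tau)\varphi(-\tfrac{n}{N};\tau)\big)}$$
from the proof of Lemma~\ref{lem:jacobi} collapses each summand to $1$, giving $\Phi_g(0;\tau)=m(N)\cdot|(\Z/N\Z)^\ast|$; a case check against Table~\ref{finitesymp} shows this equals $\#\mathrm{Fix}(g)=e(g)$ for each of the seven orders (conceptually, $\Phi_g(0;\tau)$ is the equivariant $\chi_{-1}$-genus, which by the topological Lefschetz fixed point theorem is the number of fixed points). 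Therefore $c_g=e(g)/12$.

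It then remains to identify $h_g:=\big(\Phi_g-\tfrac{e(g)}{12}\phi_{0,1}\big)/\phi_{-2,1}$ with $f_g$. The quotient is legitimate because the numerator is a weak Jacobi form of weight zero, index one vanishing at $u=0$, hence lies in $M_2(\Gamma_0(N))\,\phi_{-2,1}$ by the structure statement above; so $h_g\in M_2(\Gamma_0(N))$. Since for $N\le 8$ the space $M_2(\Gamma_0(N))$ has dimension at most three, it suffices to compare a bounded number of Fourier coefficients at the cusp $\infty$: the $q$-expansion of $h_g$ is obtained from the fixed point formula~\eqref{eq:fixed} for $\Phi_g$ together with the classical $q$-expansions of $\phi_{0,1}$ and $\phi_{-2,1}$, and the $q$-expansion of $f_g$ is read off from its explicit description in~\cite{GHV2,EH}; matching the first few coefficients forces $h_g=f_g$ in $M_2(\Gamma_0(N))$, which completes the proof.

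The main obstacle is modular-form theoretic rather than geometric: one must invoke (or reprove, via the theta decomposition) that weak Jacobi forms of weight zero and index one for $\Gamma_0(N)$ form the space $\C\,\phi_{0,1}\oplus M_2(\Gamma_0(N))\,\phi_{-2,1}$, and one must check that the level of $f_g$ really equals $\mathrm{ord}(g)$ for every symplectic class; once these are in hand the identification $h_g=f_g$ is a short finite computation, short precisely because $\dim M_2(\Gamma_0(N))\le 3$ in the relevant range.
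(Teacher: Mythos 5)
Your proposal is correct and follows essentially the same route as the paper's proof: Lemma~\ref{lem:jacobi}, the structure of weak Jacobi forms of weight $0$ and index $1$ for $\Gamma_0(N)$ as $\C\,\phi_{0,1}\oplus M_2(\Gamma_0(N))\,\phi_{-2,1}$ (which the paper simply cites from Aoki--Ibukiyama \cite{AI} rather than reproving via the theta decomposition), the specialization to the equivariant Euler characteristic to fix the $\phi_{0,1}$-coefficient as $e(g)/12$, and a comparison of finitely many Fourier coefficients using $\dim M_2(\Gamma_0(N))\le 3$. The only additions beyond the paper's argument are your explicit check that the level of $f_g$ equals $\mathrm{ord}(g)$ for the symplectic classes and your sketch of the structure statement, both of which are harmless.
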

\noindent{\bf Proof:}
By Lemma~\ref{lem:jacobi}, the  $g$-equivariant elliptic genus  is a weak Jacobi form of 
weight zero and index one 
for $\Gamma_0(N)$ with $N=\text{ord}(g)$.
By a result of \cite{AI} every such Jacobi form is a linear combination of 
$\phi_{0,1}$ and $h_N\,\phi_{-2,1}$, where the $\phi_{m,1}$ are weak Jacobi forms of weight~$m$ and index~$1$, and 
$h_{N}$  is a modular form of weight~$2$ for $\Gamma_0(N)$. 
For $y=-1$, the equivariant elliptic genus specializes to the equivariant Euler characteristic $e(g;X)$ of $X$ (cf.~\cite{Ho-Diplom}) whereas 
$\phi_{0,1}$ and $\phi_{-2,1}$ have the values~$12$ and $0$, respectively.  One checks that $e(g; X)=e(g)$.
The space of modular forms of weight~$2$ for $\Gamma_0(N)$ is a finite dimensional vector space of dimension at most~$3$.
By considering a base one sees that comparing the first three coefficients of $h_N$ with $f_g$ is completing the proof. \qed

\begin{rem}\rm It was observed by several authors (e.g.,~\cite{C}) that the McKay-Thompson series 
of the Mathieu moonshine observation for those elements $g\in M_{24}$ which can be realized
by symplectic group actions on a K3 surface are identical to the equivariant 
elliptic genus. However, the elliptic genus there is understood as the elliptic
genus of an ${\cal N}\!=\!(4,4)$ super conformal field theory and not in the sense of the topological
defined elliptic genus of a manifold as in the present paper. Without rigorous
definition of ${\cal N}\!=\!(4,4)$ super conformal field theories it is not obvious that 
both definitions are the same.
\end{rem}

\medskip

The sum decompositions of the complex genus of a K3 surface into characters of modules of the
holonomy subalgebra and the $\N4$ super Virasoro algebra 
\begin{eqnarray}
\chi_{-y}(q,{\cal L}X) & = &  (-y)^{-d/2}\chi_y\bigl(X,\bigotimes_{n=1}^{\infty}\Lambda_{yq^n}T^*
\otimes \bigotimes_{n=1}^{\infty}\Lambda_{y^{-1}q^n}T
\otimes \bigotimes_{n=1}^{\infty}S_{q^n}(T^*\oplus T)\bigr) \nonumber \\ 
                   & = & \sum_{n=0}^{\infty} \chi(X,S^nT)\cdot {\rm ch}_{M_n}(y;\,q) \nonumber  \\ 
                   & = &  24\cdot {\rm ch}_{h=1/4,l=0}(y;\,q)+ \sum_{n=0}^\infty A_n  \cdot {\rm ch}_{h=1/4+n}(y;\,q) \label{n4decomp}
\end{eqnarray}
as discussed in Section~\ref{Holonomy}
allows to express the coefficients $A_n$ in terms of Todd genera twisted by symmetric powers.
This is easily achieved for small $n$ with the help of Theorem~\ref{holonomydecomp}.
Alternatively, one can use 
the identities $S^nT=S^n(T^*)$,  $\Lambda_tT=S^0T\oplus S^1T\,t\oplus S^0T\,t^2$,
the Clebsch-Gordan rule $S^iT\otimes S^jT=S^{|i-j|}T\oplus S^{|i-j|+2}T\oplus \cdots \oplus S^{i+j}T$
and the explicit formulas for the  $\N4$ characters ${\rm ch}_{h=1/4,l=0}$ and  ${\rm ch}_{h=1/4+n}$.

The first few coefficients are given by
\begin{eqnarray*}
\sum_{n=0}^\infty A_n q^n & = & -\chi(X,\C)-\chi(X,S^2T)\,q-\chi(X,\C\oplus 2S^3T)\,q^2\\
&& \quad         -\ \chi(X,2 T \oplus  S^2T \oplus  3 S^4T)\,q^3\\
&& \quad         -\ \chi(X,\C \oplus  2  T \oplus  3 S^2T \oplus  2 S^3T \oplus  S^4T \oplus  4 S^5T)\,q^4+\cdots .
\end{eqnarray*}
Here we wrote shortly $nF$ for the $n$-fold direct sum vector bundle $F^{\oplus n}$.

According to  Mathieu moonshine, the $A_n$ can be considered as 
the dimensions of virtual $M_{24}$-modules $K_n$ (actually honest ones for $n>0$)
with explicit given McKay-Thompson series $\Sigma_{g}(q)=q^{-1/8}\sum_{n=0}{\rm Tr}(g|K_n)\,q^n$.

\smallskip

We like to investigate if Mathieu moonshine which uses the characters for the \hbox{$\N4$} super Virasoro algebra 
can be extended to a rational $M_{24}$-moonshine for the symmetric powers by using the characters for the holonomy invariant subalgebra.
In this case the series $r_{g}(t)=\chi(g;X,S_tT)$ can be computed recursively by the above formula given 
$\Sigma_{g}$ and $\chi(g;X,T)$.  
The coefficient $24$ of ${\rm ch}_{h=1/4,l=0}$ in equation~(\ref{n4decomp}) is the Euler characteristic 
of the K3 surface. As $M_{24}$-character it is interpreted in Mathieu moonshine 
as the permutation character $\underline{1}+\underline{23}$ of $M_{24}$ for its natural action on $24$ elements.
This implies that $\chi(g;X,T)$ equals the virtual $M_{24}$-character $3\cdot\underline{1}-\underline{23}$.

{}From Theorem~\ref{moonshinegeometric} it follows that for the $M_{23}$ conjugacy classes $[g]\in \{1A,2A,3A,4A,5A,6A,7AB,8A\}$
we get the rational functions already listed in Section~\ref{rationalmoon}. We checked this for the first ten coefficients.

For the remaining classes $\{11AB,14AB,15AB,23AB\}$ we get:
\begin{eqnarray*}
r_{11AB} & = & 2 + 2\,t - 2\,t^2 - t^3 - \frac{2}{3}\, t^4 - \cdots \\
r_{14AB} & = & 2 + 3\,t - t^2 - t^3 - \frac{5}{3}\,t^4 -\cdots \\
r_{15AB} & = & 2 + 3\,t - \frac{1}{2}\,t^3- 2\,t^4 -  \cdots \\
r_{23AB} & = & 2 + 3\,t + 2\,t^2 - 2\,t^3 - \frac{7}{3}\,t^4- \cdots.
\end{eqnarray*}
Since in each case either the coefficient of $t^3$ or of $t^4$ is not integral, we conclude
that $\chi(X,S_tT)$ cannot have the structure of a virtual $M_{23}$- or $M_{24}$-module compatible 
with the prediction of Mathieu moonshine.

This conclusion even holds if we do not assume that $\chi(g;X,T)=\alpha$ is the virtual $M_{24}$-character
$3\cdot\underline{1}-\underline{23}$. For example one has
$$r_{15AB}=2 + \alpha\,t - \frac{1}{2}\,t^3 - \frac{2\alpha}{3}\,t^4 -\frac{3 + 4\alpha}{12}\,t^5+\cdots$$
and so $\alpha$ and the coefficient of $t^5$ cannot simultaneously be integral.

\smallskip

We also could not find simple rational expressions for those $r_{g}$.
However, for the two $M_{24}$ conjugacy classes $2B$ and $4A$ we found the following
formulas (verified up to order 20):
\begin{eqnarray*}
r_{2B({M_{24}})} & = &\frac{2+8t+2t^2}{\Phi_2(t)^2\Phi_4(t)}, \\
r_{4A({M_{24}})} & = & \frac{2(1+t^5) + 6(t+t^4) + 12(t^2 + t^3)}{\Phi_2(t)\Phi_4(t)\Phi_8(t)}. 
\end{eqnarray*}
For the remaining conjugacy classes of $M_{24}$ not contained in $M_{23}$ we could again not find such
expressions. Also, the $r_{g}$ contain always non-integral coefficients in those cases.

\medskip

Combining this with the results of Section~\ref{Holonomy}, we conclude that the vertex algebra structure arising from
the hyperk\"ahler geometry of K3 surfaces can explain the $\N4$ super Virasoro module structure of the elliptic genus, 
and it can explain the virtual $M_{24}$-module structure.  However, there is no $M_{24}$-moonshine
for the decomposition of $\chi_y(q,{\cal L}X)$ into characters of $V^G$-modules compatible with 
the prediction of Mathieu moonshine.

\begin{rem}
Call the $\N4$ super Virasoro algebra $\text{Vir}^{\mathcal N=4}$.
It is natural to ask if the virtual $H_i\times \text{Vir}^{\mathcal N=4}$-module structure of $\chi_y(q, \mathcal L X)$ can be extended to a $M_{24}\times \text{Vir}^{\mathcal N=4}$-module structure. %Since the representation theory for the $\N4$ super Virasoro algebra $\text{Vir}^{\mathcal N=4}$ is not rigorously worked out 
On the level of characters this is the case due to Theorems \ref{moonshinegeometric} and \ref{thm:gannon}.  
\end{rem}

One can also ask if the McKay-Thompson series of Mathieu moonshine can be modified to make
them compatible with the $V^G$-module structure of the elliptic genus. Our $r_g$ for 
$g$ of type  $11AB$, $14AB$, $15AB$, $23AB$ cannot be described by weight~$2$ modular forms $h_g$ for 
$\Gamma_0(n)$, $n={\rm ord}(g)$, in the form explained in the proof of Theorem~\ref{moonshinegeometric}.
It is however at least sometimes possible to obtain rational expressions for $r_g$ for certain 
modular forms $h_g\in M_2(\Gamma_0(n))$, for example one can realize
$$\tilde{r}_{11AB}=\frac{2(1+t^8)+4(t+t^7)+\frac{32}{5}(t^2+t^6)+\frac{38}{5}(t^3+t^5)+\frac{42}{5}t^4}{\Phi_{11}(t)}.$$
This gives however not a $M_{23}$- or $M_{24}$-character.

The McKay-Thompson series for a set of modular forms $f_g\in M_2(\Gamma_0(N))$ for $g$ a conjugacy class in $M_{24}$ 
describe a graded virtual $M_{24}$-modules if the coefficients of the $f_g$ satisfy certain divisibility conditions
which in turn can easily be described by the integral structure of $M_2(\Gamma_0(N))$; cf.~\cite{Gannon}. 
In particular, the condition to define a graded virtual $M_{24}$-module alone does not determine the $f_g$ uniquely.

%Thus one sees that one has to fix at least the first ${\rm dim} M_2(\Gamma_0(N))$
%coefficients of $f_g$ or $T_g$ for specifying $T_g$ uniquely.

We recall some arguments why the predicted McKay-Thompson series of Mathieu moonshine 
as in~\cite{GHV2,EH} should be the correct choice: The first is that the $K_n$ for $n>0$ are
indeed honest $M_{24}$-modules which decompose for the first few $n$ into irreducibles with
very small multiplicity (in fact only one of two irreducibles are present for $1\leq n \leq 5$).
Secondly, Gaberdiel,  Hohenegger, and Volpato determined in~\cite{GHV3} the expected structure of 
symplectic like symmetry groups of ${\cal N}\!=\!(4,4)$ super conformal field theories.
This allows to realize additional (but not all!) conjugacy classes of $M_{24}$ as symmetries of such
conformal field theories. For some of those they showed that the equivariant elliptic genus in the
sense of conformal field theory equals the one predicted by Mathieu moonshine.
Finally, Cheng and Duncan gave in~\cite{CD} a description of the corresponding Mock modular forms in terms 
of Rademacher sums (thought as an analogy of the genus zero property of the McKay-Thompson series of Monstrous moonshine)
distinguishing the chosen $f_g$ of Mathieu moonshine.

\medskip

We conclude with the remark that a refined geometric approach should consider group actions on K3 surfaces
with generalized Calabi-Yau structures (cf.~\cite{Hu}), the corresponding generalization of the
chiral De Rham complex~\cite{HelZab1} and the additional super Virasoro module structures coming from
generalized Calabi-Yau metric structures~\cite{HelZab2}.
However, it may be too much to expect that this approach alone can explain Mathieu moonshine.

\end{document}